\newcommand{\ignore}[1]{}
\newtheorem{theorem}{Theorem}[section]
\newtheorem{lemma}[theorem]{Lemma}
\newtheorem{corollary}[theorem]{Corollary}
\newtheorem{claim}[theorem]{Claim}
\newcommand{\Proof}[1]
        {
        \noindent
        \emph{Proof #1.}~
        }
\newsavebox{\smallProofsym}                     
\newcommand{\smalleop}[1]
        {
        \mbox{} \hfill #1~~\usebox{\smallProofsym}\!\!\!\!\!\!\
        }
\newcommand{\parag}[1]{\vspace{2mm}

\noindent{\bf #1} }
\newcommand{\NN}{\ensuremath{\mathbb N}}
\newcommand{\RR}{\ensuremath{\mathbb R}}
\newcommand{\pts}{\mathcal P}
\newcommand{\curves}{\Gamma}
\newcommand{\lines}{\mathcal L}
\def\eps{{\varepsilon}}
\title{Incidences with Pfaffian Curves and Functions}
\author{Alex Balsera\thanks{Newark Academy, NJ, USA.
{\sl abalsera25@newarka.edu}.}}
\begin{document}

\date{}

\maketitle

\begin{abstract}
We introduce a new approach for studying incidences with non-algebraic curves in the plane. 
This approach is based on the concepts of Pfaffian curves and Pfaffian functions, as defined by Khovanski\u{\i}.
We derive incidence bounds for curves that are defined with exponentials, logarithms, trigonometric functions, integration, and more. 
Our bound for incidences with Pfaffian curves matches a classical bound of Pach and Sharir for incidences with algebraic curves, up to polylogarithmic factors.
\end{abstract}

\section{Introduction}

Over the past 25 years, researchers discovered a surprising number of applications for point-curve incidence bounds in $\RR^2$. 
Beyond being highly useful in combinatorial proofs, such incidence bounds are also used in number theory, harmonic analysis, theoretical computer science, and more. For a few examples, see \cite{BomBour15,Bourgain05,Demeter14,KZ19}.

Traditionally, research of incidence bounds focused on algebraic curves --- curves defined by a polynomial in the plane coordinates $x,y$. 
For example, circles are defined by equations of the form $(x-p)^2+(y-q)^2 = r^2$. 
In the past decade, the rising popularity of incidence bounds led to the study of more incidence variants. 
Model theorists now study incidences in more general models such as Distal and $o$-minimal structures.
For example, see \cite{Anderson21,BR18,CGS20}.

In the current work, we introduce a new approach for  incidence bounds with non-algebraic curves.
We study \emph{Pfaffian curves} and \emph{Pfaffian functions}, as introduced and studied by Khovanski\u{\i} \cite{Khovanskii91}.
Pfaffian curves include all algebraic curves, while also allowing some use of trigonometric functions, exponents, logarithms, inverse trigonometric functions, and more. 
For example, the curves defined by $y=\tan x$ and $y=e^{x^2-5x+3}$ are Pfaffian. 

Confusingly, curves that are defined by Pfaffian functions are not the same as Pfaffian curves.
Pfaffian functions are more general than Pfaffian curves, allowing any combination of elementary mathematical functions, and beyond. (Sines and cosines are limited to an open region of $\RR^2$ where they have a bounded number of periods.)
For example, the function $e^{\tan^{3/2}x}\cdot \log y = x^{\arctan xy}$ is Pfaffian. 
Pfaffian functions also allow some use of integration. 
For example, we may consider functions such as $y-\int_0^x \frac{dt}{\ln t}$ and $y-\int_{-\infty}^x \frac{e^t}{t} dt$.
For rigorous definitions, more examples, and intuition, see Section \ref{sec:Prelims}.

\parag{Background.} Consider a point set $\pts$ and a set of curves\footnote{For rigorous definitions of curves and other concepts, see Section \ref{sec:Prelims}.} $\curves$, both in $\RR^2$. 
An \emph{incidence} is a pair $(p,\gamma)\in \pts\times \curves$ where the point $p$ is on the curve $\gamma$. 
For example, the left part of Figure \ref{fi:FirstExample} contains four points, five curves, and 13 incidences.
The \emph{incidence graph} is a bipartite graph with vertex sets $\pts$ and $\curves$.
There is an edge between the vertex of a point $p\in \pts$ and the vertex of a curve $\gamma\in \curves$ if $p\in \gamma$. 
Figure \ref{fi:FirstExample} includes an incidence graph on the right.

\begin{figure}[ht]
\centerline{\includegraphics[width=0.5\textwidth]{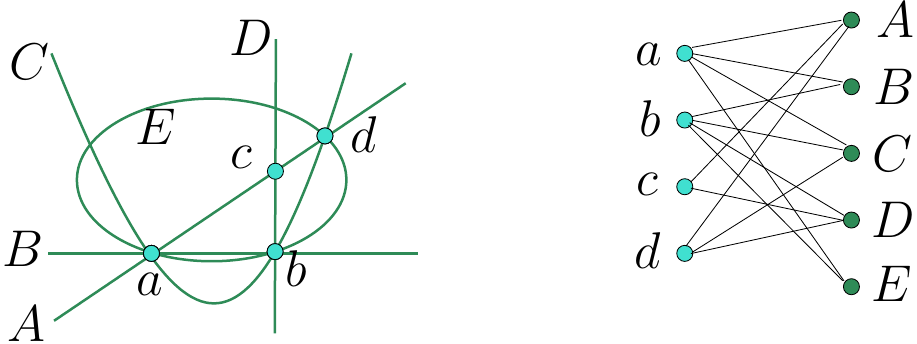}}
\caption{Left: Four points, five curves, and 13 incidences. Right: The incidence graph.}
\label{fi:FirstExample}
\end{figure}

We denote the number of incidences in $\pts\times\curves$ as $I(\pts,\curves)$.
The following theorem is a classic incidence result of Pach and Sharir \cite{PS98}. 
For a more recent variant, see Sharir and Zahl \cite{SZ17}.
The degree of an algebraic curve is the minimum degree of a polynomial that defines it.

\begin{theorem} \label{th:algebraicInc}
Consider a set $\pts$ of $m$ points and a set $\curves$ of $n$ algebraic curves of degree at most $k$, both in $\RR^2$.
If the incidence graph of $\pts$ and $\curves$ contains no copy of $K_{s,t}$ then\footnote{In $O_{k,s,t}$, the constant hidden by the $O(\cdot)$-notation may depend on $k,s,t$.}  
\[ I(\pts,\curves) = O_{k,s,t}(m^{\frac{s}{2s-1}}n^{\frac{2s-2}{2s-1}} + n + m). \]
\end{theorem}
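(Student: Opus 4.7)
The plan is to combine the K\H{o}v\'ari--S\'os--Tur\'an (KST) bound on $K_{s,t}$-free bipartite graphs with a cutting-based partitioning argument of the type developed by Clarkson, Edelsbrunner, Guibas, Sharir and Welzl. Applied naively to the full incidence graph, KST yields only $I(\pts,\curves) = O_{s,t}(m n^{1-1/s} + n)$, which is weaker than the stated exponent; the role of the cutting is to split the problem into many small subinstances, apply KST inside each, and then rebalance a parameter.

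Concretely, I would proceed as follows. For a parameter $r$ to be chosen later, construct a $(1/r)$-cutting of the arrangement of $\curves$: because the curves are algebraic of degree at most $k$, any two of them meet in $O_k(1)$ points, so a standard random-sampling / vertical decomposition argument produces $O_k(r^2)$ pseudo-trapezoidal cells, each crossed by at most $n/r$ curves. Assign each point of $\pts$ to a unique cell, with $m_i$ points in cell $i$ and $\sum_i m_i = m$. Within cell $i$, apply KST to the induced bipartite graph on its $m_i$ points and the at most $n/r$ curves crossing the cell; inheritance of $K_{s,t}$-freeness to induced subgraphs is automatic. This gives $O_{s,t}\bigl(m_i (n/r)^{1-1/s} + n/r\bigr)$ incidences per cell. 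Summing over cells yields $O_{s,t}\bigl(m (n/r)^{1-1/s} + rn\bigr)$; balancing the two terms by choosing $r \asymp m^{s/(2s-1)} / n^{(s-1)/(2s-1)}$ produces the claimed main term $m^{s/(2s-1)} n^{(2s-2)/(2s-1)}$. When this optimal $r$ falls outside the range $[1,n]$, the additive $O(m+n)$ term in the statement takes over.

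The step I expect to require the most care is the boundary and full-containment accounting. A point lying on the boundary shared by several cells must be charged to exactly one of them, but the curves through it may neither cross the interior of any single cell nor lie fully inside one, so those incidences must be attributed to lower-dimensional pieces of the cutting. Similarly, a curve may fully contain a cell rather than cross it, producing incidences that escape the KST term; here one uses the fact that an algebraic curve of degree at most $k$ can fully contain only $O_k(1)$ cells of the cutting, so these extra incidences absorb into the $O(rn)$ or $O(m+n)$ terms. These accounting issues are routine in the algebraic setting but will presumably be the place that needs genuine work when the paper later adapts the argument to Pfaffian curves, since there the polynomial-intersection bound is unavailable and must be replaced by a Khovanski\u{\i}-type estimate.
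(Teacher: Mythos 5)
The paper does not actually prove Theorem~\ref{th:algebraicInc}; it is quoted as background from Pach and Sharir \cite{PS98}. However, your blueprint --- a $(1/r)$-cutting, K\H{o}v\'ari--S\'os--Tur\'an inside each cell, and a rebalance of $r$ --- is exactly the classical proof, and it is also the template the paper follows for its own Theorem~\ref{th:IncPfaffCurves} (the Pfaffian analogue), where Bezout's bound is replaced by Khovanski\u{\i}'s Theorem~\ref{th:BezPfaff} and the cutting is a weaker random-sampling one with $O(r^2\log^2 n)$ cells, which is the source of the extra $\log$ factors. So the approach is right and matches the paper's methodology.

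Two small points worth fixing. First, your choice of $r$ has an arithmetic slip: balancing $m(n/r)^{1-1/s}$ against $rn$ gives $r \asymp m^{s/(2s-1)}/n^{1/(2s-1)}$, not $m^{s/(2s-1)}/n^{(s-1)/(2s-1)}$; your stated final bound $m^{s/(2s-1)}n^{(2s-2)/(2s-1)}$ is correct, but only with the corrected $r$. Second, the boundary accounting is a bit looser than what the paper does in the Pfaffian case: rather than reassigning boundary points to chosen cells or worrying about curves ``fully containing'' a two-dimensional cell (which cannot happen for a one-dimensional curve), the paper splits the count into three types --- interior points against crossing curves, boundary points against non-cutting curves, and boundary points against cutting curves --- and bounds the latter two directly via the number of cell boundary segments and a corner-counting argument. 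Your intuition that these contributions are absorbed into the $O(rn)$ and $O(m+n)$ terms is correct, but the precise mechanism in the paper is the three-way split rather than a charging scheme.
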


Since two lines share at most one point, the incidence graph of points and lines contains no $K_{2,2}$.
Thus, Theorem \ref{th:algebraicInc} implies that the number of incidences between $m$ points and $n$ lines is $O(m^{2/3}n^{2/3}+m+n)$. 
The same bound holds when replacing the lines with circles of radius 1, since in that case the incidence graph contains no $K_{2,3}$.
For general circles, the number of circles that contain two given points may be arbitrarily large, but the incidence graph contain no $K_{3,2}$. 

The bound of Theorem \ref{th:algebraicInc} consists of three terms. 
The term $m$ dominates the bound when the number of points is significantly larger than the number of lines, and symmetrically for the term $n$. 
For example, in the case of lines the term $m$ dominates when the number of points is more than the square of the number of lines. 
These are considered less interesting cases.
Thus, the main term of the bound is $m^{\frac{s}{2s-1}}n^{\frac{2s-2}{2s-1}}$.

\parag{Our results.}
For incidences with Pfaffian curves, we obtain an upper bound that matches Theorem \ref{th:algebraicInc}, up to additional polylogarithmic factors.

\begin{theorem} \label{th:IncPfaffCurves}
Let $\pts$ be a set of $m$ points and let $\curves$ be a set of $n$ Pfaffian curves of degree at most $k$, both in $\RR^2$.
If the incidence graph of $\pts$ and $\curves$ contains no copy of $K_{s,t}$ then
\[ I(\pts,\curves) = O_{k,s,t}\left(m^{\frac{s}{2s-1}}n^{\frac{2s-2}{2s-1}}\log^{\frac{2s-2}{2s-1}}n + n \log^2 n + m
 \right). \] 
\end{theorem}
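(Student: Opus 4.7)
The plan is to adapt the cuttings-based proof of Theorem~\ref{th:algebraicInc} to Pfaffian curves. The decisive structural input from Khovanski\u{\i}'s theory is a B\'ezout-type bound: two Pfaffian curves of degree at most $k$ intersect in $O_k(1)$ points, so an arrangement of $n$ such curves has $O_k(n^2)$ combinatorial complexity, exactly as in the algebraic case.

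First I would prove a cutting theorem: for any $n$ Pfaffian curves of degree $\le k$ and any parameter $1 \le r \le n$, there is a partition of $\RR^2$ into $O_k(r^2 \log^2 r)$ cells, each meeting at most $n/r$ of the curves. This is the Clarkson--Shor random-sampling cutting applied to the vertical decomposition of the arrangement; the $\log^2 r$ overhead is the standard cost of a two-stage sample and is the ultimate source of the polylogarithmic factors in the final bound. Finite VC dimension of the relevant range space is guaranteed by Khovanski\u{\i}'s uniform topological bounds on sub-level sets of Pfaffian functions, so the $\eps$-net / cutting framework applies in the first place.

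With the cutting in hand, fix $r$ and assign each point of $\pts$ to a single cell. For a cell $\Delta$ with $m_\Delta$ assigned points and $n_\Delta \le n/r$ crossing curves, the incidence graph remains $K_{s,t}$-free, so the K\H{o}v\'ari--S\'os--Tur\'an bound gives
\[
I(\pts\cap\Delta,\,\curves_\Delta) \;=\; O_{s,t}\!\left(m_\Delta\, n_\Delta^{1-1/s} + n_\Delta\right).
\]
Summing over cells yields $O\!\left(m(n/r)^{1-1/s} + nr\log^2 r\right)$. Balancing the two terms gives $r \approx m^{s/(2s-1)} n^{-1/(2s-1)}$ (up to logarithmic corrections) and the main term $m^{s/(2s-1)} n^{(2s-2)/(2s-1)}\log^{(2s-2)/(2s-1)} n$; the additive $n\log^2 n$ and $m$ terms arise from the degenerate regimes where this optimal $r$ would leave the range $[1,n]$, inheriting the $\log^2 n$ cost of the cutting construction. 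Incidences with points lying on cell boundaries (equivalently, on a sample curve) are absorbed using the $K_{s,t}$-free hypothesis applied to the sample together with the same pairwise intersection bound.

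The main obstacle is the Pfaffian cutting theorem itself. Beyond the $O_k(1)$ pairwise intersection bound, what must be checked is that a single cell of a vertical decomposition of Pfaffian curves has bounded Pfaffian description complexity, so that the Clarkson--Shor recursion closes and the $\log^2 r$ cell-count bound is actually achieved with constants depending only on $k$. Once this is in place, the remainder is a faithful analogue of Pach--Sharir, and the exponent $(2s-2)/(2s-1)$ of the logarithm in the main term drops out of the arithmetic when the $\log^2 r$ cell-count cost is propagated through the optimization of $r$.
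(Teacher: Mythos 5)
Your plan is the same as the paper's at the structural level: bound pairwise intersections with the Pfaffian B\'ezout theorem (Theorem~\ref{th:BezPfaff}), build a random-sample cutting of the vertical decomposition into trapezoid-like cells, apply the K\H{o}v\'ari--S\'os--Tur\'an bound (Lemma~\ref{le:KST}) inside each cell, and optimize over~$r$; the exponent arithmetic at the end, including the $\log^{(2s-2)/(2s-1)} n$ factor, matches. The one place you and the paper part ways is precisely the cutting lemma, which you correctly flag as the main unresolved obstacle. You propose to reach it via Clarkson--Shor/$\eps$-nets, which forces you to verify that a vertical-decomposition cell of a Pfaffian arrangement has bounded Pfaffian description complexity so the recursion closes --- a delicate issue, since the boundary arcs of such a cell are only pieces of Pfaffian curves. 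The paper sidesteps this entirely (Lemma~\ref{le:cutting}): take a single one-shot random sample of size $s=\Theta(r\log n)$, observe that every pf-cell is defined by at most four of the original curves so there are $O(n^4)$ candidate cells, and apply a direct union bound. The only extra Pfaffian input beyond B\'ezout is that a degree-$k$ Pfaffian curve has $O(k^2)$ vertical-tangent points (Lemma~\ref{le:VertTan}), which makes the vertical decomposition well defined. This produces $O(r^2\log^2 n)$ cells with no recursion and no VC-dimension machinery, and is why the paper tolerates a $\log^2 n$ (rather than the $\log^2 r$ you wrote, or the log-free $O(r^2)$ that a genuine two-level Clarkson--Shor cutting would give). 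Your overall strategy is right and the post-cutting analysis is the same; the simpler one-stage sampling argument is the way to actually close the gap you identified.
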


Recent incidence results are commonly proved by using the \emph{polynomial partitioning} technique (for example, see \cite[Chapter 3]{Sheffer22}).
So far, we were not able to use this technique with Pfaffian curves.
Instead, we rely on the older \emph{cutting} technique.
The additional polylogarimthic factors appear because we use a simplified variant of a cutting. 

Our incidence bound for curves that are defined by Pfaffian functions is weaker than the bound for Pfaffian curves.
This is not surprising, since such curves are significantly more general.  
Recall that the \emph{elementary functions} are the functions obtained by taking sums, products, roots, and compositions of finitely many polynomial, rational, trigonometric, hyperbolic, and exponential functions, including their inverses, such as arcsines and logarithms.
Similarly, \emph{Liouvillian functions} are the univariate elementary functions and their repeated integrals.
Bivariate Pfaffian functions subsume all bivariate elementary functions and all functions of the form $y=f(x)$ with a Liouvillian $f$.
Section \ref{sec:Prelims} includes the rigorous definitions and more intuition.

We define a \emph{Pfaffian family} $F$ using an open subset $U\subseteq \RR^2$ and a Pfaffian function of the form 
\[ f(x,y) = a_1 \cdot m_1(x,y) + a_2 \cdot m_2(x,y) + \cdots + a_d \cdot m_d(x,y). \]
Here, each $a_i$ is a real parameter and each $m_i(x,y)$ is a Pfaffian function that is a single term (such as a monomial).
The family $F$ consists of all curves in $U$ that are defined by $f(x,y)=0$ with some choice of values for $a_1,\ldots,a_d\in \RR$.
That is, a Pfaffian family is a set of infinitely many curves.
For example, when $U=\RR^2$ and $f(x,y) = a_1 \sqrt{e^{x-y^2}} + a_2 \tan \ln (x^y) + a_3 \int_{-\infty}^{x} \frac{e^t}{t} dt$, the family includes the curves defined by $\tan \ln (x^y) = 0$, by $100 \sqrt{e^{x-y^2}} - \tan \ln (x^y) - \frac{1}{100} \int_{-\infty}^{x} \frac{e^t}{t} dt = 0$, and by $10^{100}\sqrt{e^{x-y^2}} + 10^{-100}\tan \ln (x^y)=0$.

The \emph{dimension} of a Pfaffian family is the number of parameters $d$.
We are now ready to describe our incidence bound for curves defined by Pfaffian functions. 

\begin{theorem} \label{th:IncPfaffFuncs}
Let $F$ be a Pfaffian family of dimension $d$.
Let $\pts$ be a set of $m$ points in $\RR^2$.
Let $\curves$ be a set of $n$ curves from $F$, such that no two share a common component. 
Then, for any $\eps>0$, we have that
\[ I(\pts,\curves) = O_{d,\eps}\left(n^{\frac{2d-4}{2d-3}+\eps}m^{\frac{d-1}{2d-3}}+m+n\right). \]
\end{theorem}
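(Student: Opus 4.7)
The natural strategy is to \emph{linearize}: lift the planar incidence problem to a point-hyperplane problem in $\RR^d$ via the map $\phi : \RR^2 \to \RR^d$ defined by $\phi(x,y) = (m_1(x,y),\ldots,m_d(x,y))$. A curve $\gamma \in F$ defined by $\sum_{i=1}^{d} a_i m_i(x,y) = 0$ lifts to the linear hyperplane $H_\gamma = \{z \in \RR^d : \sum_{i=1}^d a_i z_i = 0\}$ through the origin, and the incidence relation is preserved exactly: $p \in \gamma$ if and only if $\phi(p) \in H_\gamma$. Thus $I(\pts,\curves)$ equals the number of incidences between the $n$ hyperplanes $\{H_\gamma\}_{\gamma \in \curves}$ and the (at most $m$) points $\phi(\pts)\subset \RR^d$; the no-common-component hypothesis forces the $H_\gamma$ to be pairwise distinct, since proportional parameter vectors define identical curves. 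Since every $H_\gamma$ contains the origin, one may equivalently regard the configuration as consisting of $n$ hyperplanes and $m$ points in $\PP^{d-1}$.

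Once the problem is linear, the plan is to invoke a point-hyperplane incidence bound in $\PP^{d-1}$ whose exponents match the Pach--Sharir template with $s=d-1$. Generically any $d-1$ points of $\PP^{d-1}$ determine a unique hyperplane, so the lifted incidence graph is $K_{d-1,2}$-free. A suitable cutting or polynomial-partitioning argument in $\RR^{d-1}$ then produces the main term $n^{(2d-4)/(2d-3)+\eps}m^{(d-1)/(2d-3)}$; for the warm-up case $d=3$ this is precisely Szemer\'edi--Trotter, and for $d=4$ it recovers the classical bound for points and circles. The $n^\eps$ factor absorbs the polylogarithmic overhead inherent to higher-dimensional cuttings, in the same spirit as the $\log^2 n$ and $\log^{(2s-2)/(2s-1)} n$ factors that appear in Theorems~\ref{th:algebraicInc} and~\ref{th:IncPfaffCurves}.

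The main technical obstacle is establishing the $K_{d-1,t}$-free property in the lifted configuration even when $d-1$ of the points $\phi(p_1),\ldots,\phi(p_{d-1})$ fail to span a $(d-1)$-dimensional subspace of $\RR^d$. In that degenerate case the orthogonal complement in parameter space has dimension at least two, so a whole pencil (or larger projective subspace) of hyperplanes in $\PP^{d-1}$ contains them, and many curves of $F$ may pass through the corresponding original points. To control this I would exploit Khovanski\u{\i}'s bound on the number of isolated intersection points of two Pfaffian curves: any two distinct curves of $\curves$ meet in at most $\beta=\beta(d)$ points. Thus if $t$ curves of $\curves$ share a common $(d-1)$-tuple of original points then every pair of them shares at least $d-1$ points, which forces $t\le 1$ whenever $d-1>\beta$. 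In the remaining small regime $d-1\le \beta$, the offending curves are confined to a lower-dimensional linear subfamily of $F$, so one recurses on a lower-dimensional instance of the theorem and absorbs the residual contribution into the additive $m+n$ terms. Collisions $\phi(p)=\phi(p')$ for distinct $p,p'\in\pts$ are harmless, since identified points are incident to exactly the same hyperplanes.
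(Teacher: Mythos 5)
Your lift goes in the wrong direction. You map \emph{plane points} to $\RR^d$ via $\phi(x,y)=(m_1(x,y),\ldots,m_d(x,y))$ and curves to hyperplanes, whereas the paper maps the \emph{curves} to points in $\RR^d$ (each $\gamma$ dualizes to its parameter vector $(a_1,\ldots,a_d)$) and each plane point $p$ to the hyperplane $\sum_i z_i m_i(p)=0$. This is not a cosmetic difference. In the paper's dual the Khovanski\u{\i} intersection bound (two distinct curves share at most $t-1$ points, so the original incidence graph has no $K_{t,2}$) becomes a $K_{2,t}$-free condition with the $2$ sitting on the $n$-point side of the dual picture, and after projecting to $\RR^{d-1}$ one invokes Theorem~\ref{th:HyperplanesInc} with $s=2$, which is exactly what produces $n^{\frac{2d-4}{2d-3}+\eps}m^{\frac{d-1}{2d-3}}$. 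In your version the same Pfaffian bound only yields $K_{\beta+1,2}$-freeness on the $m$-point side, where $\beta$ is Khovanski\u{\i}'s intersection constant. Since $\beta$ depends on the degrees and orders of the underlying Pfaffian chains --- quantities that are at least as large as $d$ in any nontrivial example --- your escape clause ``$t\le 1$ whenever $d-1>\beta$'' essentially never triggers, and the proposed fallback of ``recursing on a lower-dimensional subfamily and absorbing into $m+n$'' does not control the degenerate contribution, which can be of order $mn$.

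Separately, even granting the $K_{d-1,2}$-free claim, the exponents do not come out as you assert. Theorem~\ref{th:HyperplanesInc} in dimension $d-1$ with $s=d-1$ gives
$I = O\bigl(m^{\frac{(d-1)(d-2)}{(d-1)^2-1}+\eps}\,n^{\frac{(d-1)(d-2)}{(d-1)^2-1}}+m+n\bigr) = O\bigl(m^{\frac{d-1}{d}+\eps}n^{\frac{d-1}{d}}+m+n\bigr),$
not $n^{\frac{2d-4}{2d-3}+\eps}m^{\frac{d-1}{2d-3}}$; the two coincide at $d=3$ but diverge for $d\ge 4$, and for $m\gtrsim n^{1/3}$ your bound is strictly weaker than the theorem's. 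There is no point--hyperplane analogue of Theorem~\ref{th:algebraicInc} whose exponents match the planar Pach--Sharir template with $s=d-1$; what the paper gets is Theorem~\ref{th:HyperplanesInc} with $s=2$, and the Pach--Sharir-like form of the result is a coincidence of those exponents, not a literal application of that template. Finally, your remark that $\phi$-image collisions are ``harmless'' elides a real issue: if $q$ points share an image $\phi(p)$ lying on $k$ hyperplanes, the original configuration has $qk$ incidences while $I(\phi(\pts),H)$ counts only $k$; the paper's dualization of curves (not points) avoids this entirely, since the parameter vectors of distinct curves with no common component are automatically projectively distinct.
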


The constant hidden by the $O(\cdot)$-notation in the bound of Theorem \ref{th:IncPfaffFuncs} also depends on the degree and order of the Pfaffian function that defines $F$. 
Since these values do not affect the dependency of the bound in $m$ and $n$, we do not explicitly refer to them, to keep the statement of the theorem simple. 

Pfaffian curves include all algebraic curves with no singular points, and Pfaffian functions define all algebraic curves.
Thus, every lower bound construction for algebraic curves also holds in our case. 
We can also use an algebraic configuration with many incidences to obtain many incidences with non-algebraic curves.
For example, we can start with a point--line configuration with $\Theta(m^{2/3}n^{2/3})$ incidences and apply the transformation $(x,y) \to (e^x,y)$.
This leads to a configuration of points and exponential functions with $\Theta(m^{2/3}n^{2/3})$ incidences.
So far, we did not find constructions with a larger number of incidences than currently known for algebraic curves.
(Ignoring trivial cases, such as curves that intersect infinitely many times.)
It would be interesting to know if such configurations exist.

Section \ref{sec:Prelims} is an introduction to Pfaffian curves and functions.
In Section \ref{sec:IncPfaffCurves} we prove our incidence result for Pfaffian curves.
In Section \ref{sec:IncPfaffFunc} we prove our incidence result for curves defined by Pfaffian functions.

\parag{Future work.} The current work can be seen as a modest first step of an algebraic approach to studying incidences with non-algebraic curves. 
It leads to many questions. 
A few of those:
\begin{itemize}[noitemsep,topsep=1pt]
\item Can we improve the bounds of Theorem \ref{th:algebraicInc} or Theorem \ref{th:IncPfaffFuncs}?
\item Are there constructions with Pfaffian curves or functions with more incidences than the known constructions of algebraic curves?
\item Can we revise the concept of a Pfaffian family to obtain more general incidence bounds with curves defined by Pfaffian functions? That is, can we use parameters in a more general way than for the coefficients of a fixed set of terms?
\item Are there other useful incidence models similar to Pfaffian curves and Pfaffian functions?
\end{itemize}

\parag{Acknowledgements.} We thank Moaaz AlQady for inspiring this work. 
We are grateful to Saugata Basu and Aaron Anderson for discussions about Khovanski\u{\i}'s work and incidences under more general models. 

\section{Pfaffian Curves and Pfaffian functions} \label{sec:Prelims}

In this section, we describe the basic technical definitions that are used in this paper. 
For more information, see Khovanski\u{\i} \cite{Khovanskii91}.

\parag{Pfaffian Curves.}
We rely on the topological definition of a \emph{curve}.
That is, a curve consists of a finite number of connected components, and for each component $C$ there exists a continuous function from a real interval to $C$. 

Consider a curve $\gamma\subset \RR^2$ and an open $U\subset \RR^2$, such that $\gamma\cap U$ can be parameterized as $(f^{(x)}(t),f^{(y)}(t))$.
Here, $t\in T$ for an open interval $T\subset \RR$ and $f^{(x)}(t),f^{(y)}$ are differentiable.  
By the implicit function theorem, for any curve $\gamma$ that can be expressed using a continuously differentiable function  and any point $p\in \gamma$, there exists an open set $U$ that contains $p$, such that $\gamma \cap U$ can be parameterized as above. 
For a point $p=(f^{(x)}(t_p),f^{(x)}(t_p))\in U$, the \emph{directional vector} of $\gamma$ at $p$ is $\left(\frac{d f^{(x)}}{dt}(t_p),\frac{d f^{(y)}}{dt}(t_p) \right)$.
For example, let $\gamma$ be the parabola defined by $y=x^2$ and let $U=\RR$.
We may set $f_x=t$ and $f_y=t^2$, with $T=\RR$. 
The directional vector at a point $(f^{(x)}(t_p),f^{(y)}(t_p))$ is $(1, 2t)$.

Consider an open $U\subset \RR^2$.
A \emph{vector field} $V:U \to \RR^2$ assigns a vector to each point in $U$.
For a point $p=(p_x,p_y)\in \RR^2$, we write $V(p) = (V_x(p),V_y(p))$.
For example, we may consider the vector field $V(p) = (1,2x)$.
Note that, with respect to the parabola from the preceding paragraph, the direction vector at any $p\in \gamma$ is $V(p)$. 
See Figure \ref{fi:ParabVF}(a).

 \begin{figure}[ht]
    \centering
    \begin{subfigure}[b]{0.23\textwidth}
    \centering
        \includegraphics[width=0.97\textwidth]{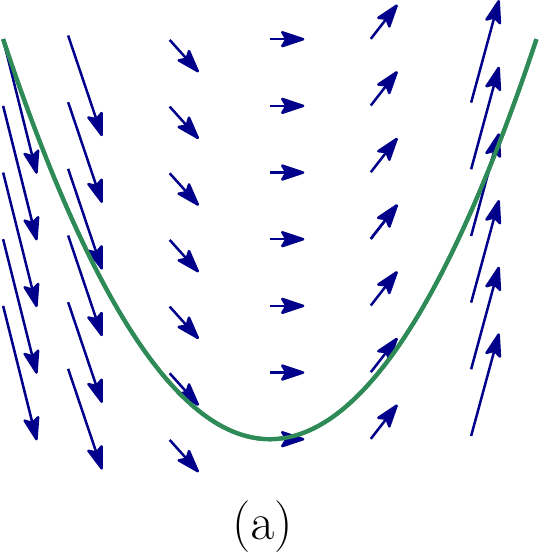}
    \end{subfigure}
    \hspace{1cm}
    \begin{subfigure}[b]{0.33\textwidth}
        \centering
        \includegraphics[width=\textwidth]{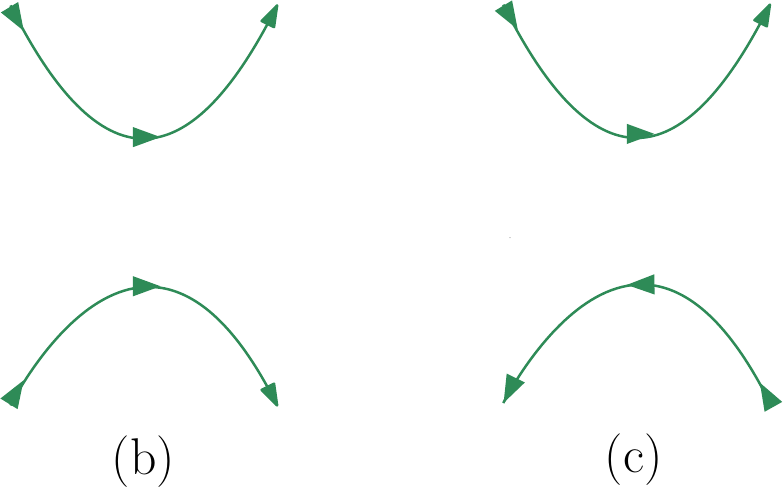}
    \end{subfigure}
    \caption{(a) The parabola $y=x^2$ and the corresponding vector field $V(p) = (1,2x)$. (b) An example that does not satisfy the third condition of a separating solution. When traveling along the upper component, the middle cell is to our right. When traveling along the bottom component, the middle cell is to our left. (c) The issue from part (b) is resolved. }
    \label{fi:ParabVF}
\end{figure}

A vector field $V$ is \emph{polynomial} if both $V_x(p)$ and $V_y(p)$ are polynomials in the coordinates of $p$.
The degree of a polynomial vector field is the larger of the degrees of $V_x(p)$ and $V_y(p)$.

Consider an open $U\subset \RR^2$ and a vector field $V:U \to \RR^2$.
A curve $\gamma\subset U$ is a \emph{separating solution} of $V$ if it satisfies:

\begin{enumerate}[noitemsep,topsep=1pt,label=(\roman*)]
\item There exists a parameterization of $\gamma$ such that, at every point $p\in \gamma$, the directional vector of $\gamma$ at $p$ is $V(p)$.
\item Every $p\in \gamma$ satisfies that $V(p)\neq (0,0)$.
\item For every connected component $\gamma_i$ of $\gamma$, the set $U \setminus \gamma_i$ is not connected. 
For each connected component $C$ of $U \setminus \gamma$, when traveling along $\gamma$ in the direction of $V$, either $C$ is always to the right, or always to the left. See Figure \ref{fi:ParabVF}(b) and Figure \ref{fi:ParabVF}(c). 
\end{enumerate}

We say that a curve $\gamma$ is \emph{Pfaffian} if it is a separating solution of a polynomial vector field.
The \emph{Pfaffian degree} of $\gamma$ is the minimum degree of a polynomial vector field that corresponds to $\gamma$. 
For an algebraic curve, the Pfaffian degree may not equal the standard algebraic degree.
By definition, the directional vector of $\gamma$ also consists of polynomials. 

Every algebraic curve with no singular points is a Pfaffian curve.
An open continuous subset of an algebraic curve with no singular points is a Pfaffian curve in an appropriate open subset of $\RR^2$.
A few other examples of Pfaffian curves:
\begin{itemize}[noitemsep,topsep=1pt]
\item The curve $y=\tan x$ corresponds to the parameterization $(t,\tan t)$ and vector field $(1,1+y^2)$. 
Indeed, the derivative of $\tan t$ is $\sec^2 t = 1+\tan^2 t$.
The open interval $T$ depends on the period of $y=\tan x$ that we are interested in.
\item The curve $y=\arctan x$ corresponds to the parameterization $(\tan t, t)$ and vector field $(1+x^2,1)$.
\item The curve $y=e^x$ corresponds to the parameterization $(t, e^t)$ and vector field $(1,y)$.
\item The curve $y=\ln(x)$ corresponds to the parameterization $(e^t, t)$ and vector field $(x,1)$.
In this case, $T=(0,\infty)$.
\item The curve $y=1/x$ corresponds to the parameterization $(t, 1/t)$ and vector field $(1,-y^2)$.
In this case, $T=(0,\infty)$ or $T=(-\infty,0)$.
\item The curve $y=1/x^{1/k}$ with $k\in \NN$ is Pfaffian, but it is difficult to show that with the straightforward parameterization $(t,t^{-1/k})$.
Instead, we can use the parameterization $(e^t, e^{-t/k})$ and vector field $(x,-y/k)$. 
\item All above examples are of the form $y=f(x)$, since these are the simplest cases. 
For a different example, consider the circle $(\cos t, \sin t)$ with $T=(0,2\pi)$. 
This corresponds to the vector field $(-y,x)$.
(This parameterization is missing one point of the circle. To be a separating solution, we need to remove that point from $\RR^2$.) 
\end{itemize}

The following lemma provides a few examples for building infinite families of Pfaffian curves.

\begin{lemma} \label{le:PcurveProps}
Consider a Pfaffian curve $\gamma$ that is parameterized as $(f^{(x)}(t),f^{(y)}(t))$ and with vector field $V(p) = (V_x(p),V_y(p))$. \\ 
(a) Applying an invertible linear transformation on $\gamma$ leads to a Pfaffian curve. \\
(b) Assume that $f^{(x)}(t) = t$ and that $\frac{d f^{(y)}}{dt} \in \RR[f^{(y)}]$ (the derivative can be expressed as a polynomial in $f^{(y)}$). 
Then, for any $p\in \RR[x]$, the curve defined by $y=f^{(y)} \circ p$ is Pfaffian. 
\end{lemma}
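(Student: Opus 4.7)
The plan is to argue both parts by producing, for each operation, an explicit polynomial vector field for the transformed curve and then checking the three conditions of a separating solution.

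For part (a), let $L : \RR^2 \to \RR^2$ be the invertible linear map. We parameterize $L(\gamma)$ by $t \mapsto L(f^{(x)}(t), f^{(y)}(t))$ on the open set $L(U)$, and by the chain rule its directional vector at $q := L(p)$ equals $L \cdot V(p) = L \cdot V(L^{-1}(q))$. The crucial observation is that since $L^{-1}$ is linear, substituting $L^{-1}(q)$ into each polynomial component of $V$ produces a polynomial in the coordinates of $q$ of the same degree, and multiplying by $L$ preserves this. Condition (i) then holds by construction, (ii) follows from the invertibility of $L$ (which sends nonzero vectors to nonzero vectors), and (iii) follows because $L$ is a homeomorphism of $\RR^2$, so the separating property and the ``always to the right or always to the left'' property transfer verbatim; the orientation may swap globally if $\det L < 0$, but this is allowed by condition (iii).

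For part (b), the hypothesis gives a polynomial $Q \in \RR[y]$ such that $(f^{(y)})'(t) = Q(f^{(y)}(t))$, and we may as well take $V = (1, Q(y))$ as the original vector field, since this agrees with the directional vector of $\gamma$ at every point of $\gamma$. Given $p \in \RR[x]$, we parameterize the composed curve by $t \mapsto (t, f^{(y)}(p(t)))$ and compute its directional vector via the chain rule as $(1, (f^{(y)})'(p(t)) \cdot p'(t)) = (1, Q(f^{(y)}(p(t))) \cdot p'(t))$. Expressed in the coordinates of the point, this is $V'(x,y) = (1, Q(y) \cdot p'(x))$, a polynomial vector field verifying condition (i). The remaining conditions are immediate from the constant first component: $V'$ is nowhere zero, and the curve is a graph over $x$ oriented in the direction of increasing $x$, so each connected component separates its horizontal strip in the working open set, with the region above always to the left of the oriented curve and the region below always to the right.

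No serious obstacle is expected in either part; the lemma is essentially a closure property of Pfaffian curves under linear changes of coordinates and polynomial reparameterizations of the independent variable. The one point of bookkeeping is choosing the open set $U'$ appropriately, for example restricting to a single period if $f^{(y)} = \tan$ so that $f^{(y)} \circ p$ is everywhere defined and smooth and the resulting curve has only finitely many connected components. This is routine and does not affect the polynomial nature of $V'$.
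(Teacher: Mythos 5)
Your proof is correct and takes essentially the same route as the paper: compose the parameterization with the transformation, apply the chain rule, and observe that the resulting directional vector is a polynomial in the coordinates of the new point (in part (a) because $L^{-1}$ is linear, in part (b) because $p'\in\RR[x]$ and $(f^{(y)})'\circ p = Q\circ(f^{(y)}\circ p)$). If anything, your write-up is slightly more explicit than the paper's in spelling out the substitution $p=L^{-1}(q)$ in part (a) and in displaying the vector field $(1,\,Q(y)\cdot p'(x))$ and the left/right orientation discussion in part (b).
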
 

For any polynomial $p\in \RR[t]$, Lemma \ref{le:PcurveProps}(b) implies that $y=e^{p(x)}$ and $y=1/p(t)$ are Pfaffian.
These are only a few examples of methods for creating Pfaffian curves.
It is not difficult to come up with variants and generalizations of Lemma \ref{le:PcurveProps}.\footnote{For example, the condition $\frac{d f^{(y)}}{dt} \in \RR[f^{(y)}]$ in part (b) can be replaced by $\frac{d f^{(y)}}{dt} =f^{(y)}\cdot q$ with $q\in \RR[t]$.}

\begin{proof}[Proof of Lemma \ref{le:PcurveProps}.]
Consider an invertible linear transformation that is defined by the matrix
\[ A = \begin{bmatrix}a_1,a_2\\ a_3,a_4 \end{bmatrix}, \quad \text{ with } a_1,a_2,a_3,a_4 \in \RR. \]

Applying the transformation $A$ on $\gamma$ leads to the curve 
\[ A\cdot (f^{(x)}(t),f^{(y)}(t)) = \begin{bmatrix}a_1\cdot f^{(x)}(t) +a_2 \cdot f^{(y)}(t)\\ a_3\cdot f^{(x)}(t)+a_4\cdot f^{(y)}(t) \end{bmatrix}. \]
The corresponding directional vector is $\big(d(a_1\cdot f^{(x)}(t) +a_2 \cdot f^{(y)}(t))/dt, d(a_3\cdot f^{(x)}(t)+a_4\cdot f^{(y)}(t))/dt\big)$.
Thus, $A\cdot \gamma$ has a polynomial vector field.
It is not difficult to verify that $A\cdot \gamma$ satisfies properties (ii) and (iii) of a separating solution (possibly in a smaller open set $U$).

(b) We denote the curve defined by $y=f^{(y)} \circ p$ as $\gamma'$. 
We observe that $\gamma$ is piece of a graph (intersecting every vertical line once, in some interval of $x$-coordinates). 
This implies that $\gamma'$ is also a graph, possibly in a smaller interval.
Thus, $\gamma'$ is a separating solution, possibly in a smaller open subset than the one corresponding to $\gamma$.
 
We note that
\[ \frac{df^{(y)}\circ p}{dt} = \left(\frac{df^{(y)}}{dt} \circ p\right) \cdot \frac{dp}{dt}. \]
Since $p\in \RR[t]$ and $\frac{d f^{(y)}}{dt} \in \RR[f^{(y)}]$, the above derivative is in $\RR[t,f^{(y)}\circ p]$, as required. 
\end{proof}

We now recall Bezout's theorem (for example, see \cite[Section 14.4]{Gibson98}). 

\begin{theorem}
Let $f$ and $g$ be nonzero polynomials in $R[x, y]$ of degrees $k_f$ and $k_g$, respectively. 
If $f$ and $g$ do not have common factors, then these two curves intersect in at most $k_f \cdot k_g$ points.
\end{theorem}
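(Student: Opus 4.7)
The plan is to prove this classical statement via the resultant. View $f$ and $g$ as polynomials in $y$ with coefficients in $R[x]$. After a generic linear change of coordinates in the plane, I may assume (i) the leading coefficient of $f$ as a polynomial in $y$ is a nonzero constant, so $\deg_y f = k_f$, and similarly $\deg_y g = k_g$, and (ii) no two distinct common zeros of $f$ and $g$ share the same $x$-coordinate. Condition (i) is achieved by a rotation that avoids a finite set of bad directions (the directions in which the leading form of $f$ or $g$ vanishes), and condition (ii) is achieved by further avoiding the finitely many directions that align pairs of intersection points, which is legitimate because the number of intersection points is a priori finite (one can verify finiteness directly from the fact that $f$ and $g$ share no common factor, using, e.g., one-variable elimination on a generic line).

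Next, form the resultant $h(x) = \operatorname{Res}_y(f, g) \in R[x]$. A standard computation with the Sylvester matrix, in which the entries in column $j$ contribute degree at most $\max(k_f, k_g)$ in $x$ in a way that sums telescopically across the $k_f + k_g$ columns, shows that $\deg h \le k_f \cdot k_g$. This degree bound is the key arithmetic input.

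I would then argue that $h$ is not the zero polynomial. Since $f$ and $g$ are coprime in $R[x,y]$, Gauss's lemma implies they remain coprime when viewed in $R(x)[y]$. A basic property of the resultant over a field (here $R(x)$) states that $\operatorname{Res}_y(f,g) = 0$ if and only if $f$ and $g$ share a common factor in $R(x)[y]$; hence $h \not\equiv 0$ in $R[x]$.

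Finally, for any common zero $(x_0, y_0)$ of $f$ and $g$, the two univariate polynomials $f(x_0, y)$ and $g(x_0, y)$ share the root $y_0$, so $h(x_0) = 0$. By condition (ii), distinct common zeros contribute distinct roots of $h$, and so the number of common zeros is at most $\deg h \le k_f k_g$. I expect the main obstacle to be purely bookkeeping: carefully justifying the generic choice of coordinates and verifying the degree bound on the Sylvester determinant rather than any deep algebraic input.
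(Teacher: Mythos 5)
The paper does not supply a proof of this statement: it is stated as a recall of the classical B\'ezout theorem, with a pointer to Gibson's textbook. So there is no in-paper argument to compare against; I can only assess your proposal on its own terms.

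Your resultant argument is a correct and standard route to the affine form of B\'ezout. The structure is sound: after a linear change of coordinates you may assume the $y$-leading coefficients of $f$ and $g$ are nonzero constants, so that the coefficient of $y^j$ in $f$ has $x$-degree at most $k_f - j$ (and similarly for $g$); a direct expansion of the $(k_f+k_g)\times(k_f+k_g)$ Sylvester determinant then gives $\deg_x \operatorname{Res}_y(f,g) \le k_f k_g$, since each permutation term contributes total degree at most $\sum_j \sigma(j) - \tbinom{k_f+1}{2} - \tbinom{k_g+1}{2} = k_f k_g$. Gauss's lemma gives nonvanishing of the resultant from coprimality in $\RR[x,y]$, and the rest is exactly as you say. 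Two small bookkeeping notes. First, the finiteness you invoke before justifying condition (ii) is best derived \emph{from} the resultant rather than asserted beforehand: once (i) holds and $h = \operatorname{Res}_y(f,g) \not\equiv 0$, there are finitely many admissible $x_0$ and for each one $f(x_0,\cdot)$ is a nonzero univariate polynomial, so finitely many $y_0$; then a second rotation achieves (ii). Second, the ``telescoping'' picture of the degree bound is a bit loose as phrased; the clean statement is the permutation-sum computation above. Neither affects correctness, and you are right that these are the only delicate spots.
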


Khovanski\u{\i} proved an analogue of Bezout's Theorem for Pfaffian curves.
 
\begin{theorem} \label{th:BezPfaff}
Let $\gamma_1,\gamma_2\subset \RR^2$ be Pfaffian curves of Pfaffian degrees $k_1$ and $k_2$, respectively.
If $\gamma_1$ and $\gamma_2$ do not have a common component, then they intersect in at most $(k_1+k_2)(2k_1+k_2)+k_1+1$ points. 
\end{theorem}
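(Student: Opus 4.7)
My plan is to follow Khovanski\u{\i}'s classical strategy from fewnomial theory: reduce the counting of intersections between two transcendental curves to zero-counting of a single polynomial along one of them, and then invoke standard Khovanski\u{\i} bounds. Let $V_i=(V_{ix},V_{iy})$ be the degree-$k_i$ polynomial vector field whose separating solution is $\gamma_i$, and form the Wronskian polynomial $J(x,y) = V_{1x}V_{2y} - V_{1y}V_{2x}$, of degree at most $k_1+k_2$. This $J$ vanishes precisely where $V_1$ and $V_2$ are parallel, so in particular at every tangential intersection of $\gamma_1$ with $\gamma_2$. Because the two curves share no component, $\gamma_1\cap\gamma_2$ is discrete.

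The core geometric step is a Rolle-type observation: between any two consecutive points of $\gamma_1\cap\gamma_2$ along a component of $\gamma_1$, there is a zero of $J$ on $\gamma_1$ distinct from both endpoints. To prove this, let $p_1,p_2$ be consecutive intersections and let $\alpha$ be the arc of $\gamma_1$ between them. Near $\alpha$, integrate the vector field $V_2$ to obtain a smooth local foliation whose leaves are the $V_2$-orbits, and choose a smooth transverse coordinate $\phi$ so that $\gamma_2$ is the zero set $\{\phi=0\}$ locally. Then $\phi$ vanishes at $p_1$ and $p_2$ and is nonzero on the interior of $\alpha$ (by consecutivity and the separating property of $\gamma_2$). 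Applying Rolle's theorem to $\phi$ pulled back along $\gamma_1$, there is an interior point $q^*\in\alpha$ with $V_1(q^*)\cdot\nabla\phi(q^*)=0$. Since $\nabla\phi$ is orthogonal to the leaves of $V_2$, this forces $V_1(q^*)\parallel V_2(q^*)$, i.e.\ $J(q^*)=0$.

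Summing over components of $\gamma_1$ yields $|\gamma_1\cap\gamma_2| \le \#\{\text{zeros of }J\text{ on }\gamma_1\} + \#\{\text{components of }\gamma_1\}$. At this point I would invoke two standard Khovanski\u{\i} fewnomial bounds: the number of isolated zeros of a polynomial of degree $d$ on a Pfaffian curve of Pfaffian degree $k$ is at most $d(d+k)$, and the number of connected components of a Pfaffian curve of degree $k$ is at most $k+1$. Substituting $d=k_1+k_2$ and $k=k_1$ gives exactly $(k_1+k_2)(2k_1+k_2)+k_1+1$.

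The main obstacle is not the Rolle step itself, which is clean once set up, but rather the two black-box Khovanski\u{\i} bounds: each is proved by an iterated Rolle--Khovanski\u{\i} argument (passing to Lie derivatives repeatedly) together with classical Bezout at the algebraic base case, and this is what constitutes the technical heart of fewnomial theory. Additional care is needed to verify that the local coordinate $\phi$ exists and is smooth along each arc $\alpha$ (which may pass close to several leaves of $V_2$), and to handle the case where $p_1$ or $p_2$ is itself a tangential intersection, so that $J$ already vanishes at an endpoint and one must check that the interior $J$-zero $q^*$ is genuinely new.
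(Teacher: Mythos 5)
The paper does not prove Theorem~\ref{th:BezPfaff}; it is stated as a quoted result of Khovanski\u{\i} with a pointer to \cite{Khovanskii91}, so there is no in-paper argument to compare against. Your outline does follow the expected Khovanski\u{\i}-style shape, and the arithmetic is consistent: with $d=k_1+k_2$ the decomposition $d(d+k_1)+(k_1+1)$ is exactly $(k_1+k_2)(2k_1+k_2)+k_1+1$.

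Two things to tighten. In the Rolle step, straightening the $V_2$-flow into a transverse coordinate $\phi$ along the arc $\alpha\subset\gamma_1$ from $p_1$ to $p_2$ presumes $V_2\neq 0$ on $\alpha$; but $\alpha$ lies on $\gamma_1$, not $\gamma_2$, so nothing prevents $V_2$ from vanishing there. You should split that case off, noting that if $V_2(q)=(0,0)$ for some $q\in\alpha$ then $J(q)=V_{1x}(q)V_{2y}(q)-V_{1y}(q)V_{2x}(q)=0$ automatically. Even when $V_2\neq 0$ on $\alpha$, one flow box need not cover the whole arc; you must either glue charts or argue directly from the global separating property of $\gamma_2$ (the ``side-of-$\gamma_2$'' sign is globally defined on the complement and of one sign on the interior of $\alpha$, which forces a tangency). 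More substantively, the two deferred bounds --- $d(d+k)$ zeros of a degree-$d$ polynomial on a Pfaffian curve of degree $k$, and $k+1$ components --- are precisely where the entire Khovanski\u{\i}--Rolle induction lives, bottoming out in classical Bezout; you flag this, and it mirrors how the paper itself handles the result by citation, but as written the proposal reduces the statement to other Khovanski\u{\i} theorems rather than proving it.
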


We also require the following property of Pfaffian curves.

\begin{lemma} \label{le:VertTan}
Let $\gamma$ be a Pfaffian curve of Pfaffian degree $k$ that is not a segment of a vertical line. 
Then the directional vector of $\gamma$ is vertical in $O(k^2)$ points.
\end{lemma}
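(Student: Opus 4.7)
The plan is to reduce the lemma to counting intersections of $\gamma$ with an algebraic curve and then to invoke Khovanski\u{\i}'s Bezout analogue, Theorem~\ref{th:BezPfaff}. By the definition of a separating solution, the directional vector at $p \in \gamma$ equals the vector field value $V(p) = (V_x(p), V_y(p))$, which is nonzero by condition (ii). Hence the directional vector is vertical if and only if $V_x(p) = 0$, so the quantity to bound is $|\gamma \cap Z|$, where $Z = \{V_x = 0\} \subset \RR^2$ is an algebraic curve of degree at most $k$. A preliminary check rules out shared components: if a component $\gamma_i$ of $\gamma$ lay in $Z$, then $df^{(x)}/dt = V_x(f^{(x)}, f^{(y)}) \equiv 0$ would force $f^{(x)}$ to be constant, making $\gamma_i$ a vertical line segment and contradicting the hypothesis.

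Next, I would replace $V_x$ by its squarefree part and split $Z$ into its smooth locus $Z^{\circ}$, where $\nabla V_x \neq 0$, and its singular locus $Z^{\text{sing}}$, where $\nabla V_x = 0$. The singular locus is cut out by the two polynomials $V_{x,x}$ and $V_{x,y}$ of degree at most $k-1$; since a squarefree $V_x$ cannot have an irreducible factor along which both partials vanish generically, a standard Bezout argument gives $|Z^{\text{sing}}| = O(k^2)$. On $Z^{\circ}$, the polynomial vector field $W = (-V_{x,y}, V_{x,x})$ has degree at most $k-1$, is everywhere nonzero, and is tangent to $Z^{\circ}$, so $Z^{\circ}$ can be exhibited as a Pfaffian curve of Pfaffian degree at most $k-1$ in an appropriately chosen open set. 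Applying Theorem~\ref{th:BezPfaff} to $\gamma$ and $Z^{\circ}$, which share no component by the previous step, gives $|\gamma \cap Z^{\circ}| \leq (2k-1)(3k-1) + k + 1 = O(k^2)$, and adding $|Z^{\text{sing}}|$ yields the claimed bound.

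The main obstacle I anticipate is verifying condition (iii) of a separating solution for $Z^{\circ}$: an individual connected component of an algebraic curve need not separate $\RR^2$, as a single branch of $xy = 1$ illustrates. The technical point is either to restrict to a suitable open set in which each component of $Z^{\circ}$ separates, or to decompose $Z^{\circ}$ into pieces that do, and then to arrange the decomposition so that summing the Theorem~\ref{th:BezPfaff} bound across pieces still telescopes to $O(k^2)$ rather than degrading to $O(k^3)$ when many components must be handled individually.
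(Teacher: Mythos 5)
Your core reduction is exactly the paper's: the directional vector is vertical at $p$ precisely when $V_x(p)=0$ (using condition (ii) to see $V\neq 0$), the non-vertical hypothesis rules out a shared component between $\gamma$ and $\{V_x=0\}$, and Theorem~\ref{th:BezPfaff} then gives $O(k^2)$ intersections. The paper's proof is just these three sentences and stops there; it applies Theorem~\ref{th:BezPfaff} directly to $\gamma$ and the algebraic curve $\{V_x=0\}$ of degree at most $k$, without any smooth/singular decomposition.

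Where you diverge is in trying to certify that $\{V_x=0\}$ is itself a Pfaffian curve before invoking Theorem~\ref{th:BezPfaff}, splitting off the singular locus and building a vector field $W=(-V_{x,y},V_{x,x})$ for the smooth part. This worry is understandable given the paper's earlier remark that only \emph{non-singular} algebraic curves are Pfaffian, and you are right that your auxiliary construction runs into the separating-solution condition (iii). But the extra scaffolding is unnecessary: the Khovanski\u{\i} result that Theorem~\ref{th:BezPfaff} paraphrases actually bounds the number of zeros of an arbitrary polynomial along a separating solution, i.e.\ the intersection of a Pfaffian curve with the zero set of a polynomial, with no requirement that the latter be Pfaffian or non-singular. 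Read this way, the paper's one-step application is sound, and you can drop the entire smooth/singular decomposition (and the unresolved condition-(iii) issue it raises). If you prefer not to re-interpret the theorem statement, your singular-locus bound is fine, but you would still need to actually close the condition-(iii) gap for $Z^{\circ}$, which you flag but do not resolve; as written, your proof has a hole precisely where you anticipate one.
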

\begin{proof}
Let $\gamma$ be parameterized as $(f^{(x)}(t),f^{(y)}(t))$ and with vector field $V(p) = (V_x(p),V_y(p))$.
We note that $\gamma$ has a vertical tangent at a point $p\in \RR^2$ if $V_x(p)=0$. 
By definition, $V_x$ is a polynomial of degree at most $k$. 
Since $\gamma$ is not a segment of a vertical line, $\gamma$ and the curve defined by $V_x(p)=0$ do not share common components. 
By Theorem \ref{th:BezPfaff}, these two curves intersect in $O(k^2)$ points. 
\end{proof}

Finally, we mention a main weakness of the definition of Pfaffian curves. 
When two functions $f_1(x,y)$ and $f_2(x,y)$ define Pfaffian curves, it is not always true that $f_1+f_2$ defines a Pfaffian curve.

\parag{Pfaffian functions.} 
Consider an open set $U\subset \RR^2$ and let $f_1,f_2,\ldots,f_r$ be analytic functions from $U$ to $\RR$.
We say that $f_1,f_2,\ldots,f_r$ form a \emph{Pfaffian chain} if, for every $1\le i \le r$, there exist polynomials $g_{i,x},g_{i,y}\in \RR[x,y,z_1,\ldots,z_i]$ that satisfy
\begin{align*} 
\frac{\partial f_i}{\partial x} &= g_{i,x}(x,y,f_1,f_2,\ldots,f_i), \\[2mm]
\frac{\partial f_i}{\partial y} &= g_{i,y}(x,y,f_1,f_2,\ldots,f_i).
\end{align*}
The \emph{order} of the chain is $r$.
The \emph{degree} of the chain is $\max_{1\le i\le r} \max\{\deg g_{i,x},\deg g_{i,y}\}$.

A function $f$ from $U$ to $\RR$ is \emph{Pfaffian} if there exists a Pfaffian chain $f_1,f_2,\ldots,f_r$ and a polynomial $g\in \RR[x,y,z_1,\ldots, z_r]$ such that 
\[ f(x,y) = g(x,y,f_1,\ldots,f_r). \]
The \emph{order} of the Pfaffian function $f$ is the order of the corresponding Pfaffian chain. 
The \emph{degree} of the Pfaffian function $f$ is the pair $(k,\deg g)$, where $k$ is the degree of the Pfaffian chain.

As our first example of a Pfaffian function, consider $f(x,y)=x\cdot y^5-e^{x^2+3x}$.
We set $f_1(x,y) = e^{x^2+3x}$. 
Then
\begin{align*}
\frac{\partial f_1}{\partial x} &= (2x+3)\cdot e^{x^2+3x}, \\[2mm]
\frac{\partial f_1}{\partial y} &= 0, \\[2mm]
f(x,y) &= x\cdot y^5-f_1(x,y).
\end{align*}
Setting $g_{1,x}(x,y,z_1) = (2x+3)\cdot z_1$ and $g_{1,y}(x,y,z_1)=0$, implies that $f_1$ is a Pfaffian chain of order 1 and degree 2. 
Setting $g(x,y,z_1) = x\cdot y^5-z_1$ implies that $f(x,y)$ is a Pfaffian function of order 1 and degree $(2,6)$.

Every polynomial $f \in \RR[x,y]$ is a Pfaffian function of order 0 and degree $(0,\deg f)$. 
Indeed, we can take an empty Pfaffian chain and set $g=f$.
The following claim considers the more complicated example of the curve $y=\cos x$.

\begin{claim} \label{cl:cosPfaff}
The function $f(x,y)= y-\cos x$ is Pfaffian in the open set $U=\{(x,y)\in\RR^2\ :\ -\pi< x <\pi \}$.
\end{claim}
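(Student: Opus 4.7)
The plan is to exploit the tangent half-angle substitution. The naive attempt of setting $f_1 = \cos x$ as the entire chain already fails, since $\partial(\cos x)/\partial x = -\sin x$ is not a polynomial in $x, y, \cos x$ (the identity $\sin^2 = 1 - \cos^2$ forces a square root). I therefore need chain elements whose $x$-derivatives close polynomially in the chain itself.

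I will build a chain of length two. First, set $f_1(x,y) = \tan(x/2)$; this is analytic on $U$ since $x/2 \in (-\pi/2, \pi/2)$ there. The identity $\sec^2 = 1 + \tan^2$ gives $\partial f_1/\partial x = (1 + f_1^2)/2$ and $\partial f_1/\partial y = 0$, so the associated polynomials are $g_{1,x}(z_1) = (1 + z_1^2)/2$ and $g_{1,y} = 0$. Next, set $f_2(x,y) = 1/(1 + \tan^2(x/2))$, which is analytic on $U$ because the denominator is at least $1$ everywhere. Writing $f_2 = 1/(1 + f_1^2)$ and differentiating gives $\partial f_2/\partial x = -f_1 f_2$, so $g_{2,x}(z_1, z_2) = -z_1 z_2$ and $g_{2,y} = 0$.

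Finally, I invoke the standard half-angle identity $\cos x = (1 - \tan^2(x/2))/(1 + \tan^2(x/2)) = (1 - f_1^2) f_2$, so that
\[ f(x,y) = y - \cos x = y - f_2 + f_1^2 f_2 = g(x, y, f_1, f_2), \]
for $g(x, y, z_1, z_2) = y - z_2 + z_1^2 z_2$. This exhibits $f$ as Pfaffian on $U$ of order $2$ and degree $(2, 3)$.

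The only real conceptual hurdle is the first step: recognizing that $\cos x$ itself cannot sit in the chain, and that one must pass to a rational trigonometric reparameterization instead. Once the half-angle substitution is in hand, each verification is a one-line differentiation. The restriction to $-\pi < x < \pi$ is exactly what is needed to keep $\tan(x/2)$ analytic on $U$, which is why this open set (rather than all of $\RR^2$) appears in the statement.
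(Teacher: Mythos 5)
Your proof is correct and follows essentially the same route as the paper: both use the chain $f_1 = \tan(x/2)$, $f_2 = \cos^2(x/2)$ (your $1/(1+\tan^2(x/2))$ is identically $\cos^2(x/2)$), with the same closure polynomials $g_{1,x}=(1+z_1^2)/2$ and $g_{2,x}=-z_1 z_2$. The only cosmetic difference is the final half-angle identity used to express $\cos x$ via the chain --- the paper uses $\cos x = 2f_2-1$ (giving $g = y-2z_2+1$ of degree $1$), while you use $\cos x = (1-f_1^2)f_2$ (giving $g = y - z_2 + z_1^2 z_2$ of degree $3$), so the paper obtains a slightly smaller degree bound, but the content is the same.
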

\begin{proof}
In this case, we use a Pfaffian chain of order 2. 
We set $f_1(x,y) = \tan(x/2)$ and $f_2(x,y)= \cos^2(x/2)$. 
We have that
\begin{align*}
\frac{\partial f_1}{\partial x} &= \frac{1}{2\cos^2 x} = \frac{1}{2}\left(1+\frac{1}{\cos^2 x}-1\right) = \frac{1}{2}\left(1+\tan^2 x\right), \\[2mm]
\frac{\partial f_2}{\partial x} &= -\sin(x/2)\cos(x/2), \\[2mm]
\frac{\partial f_1}{\partial y} &= \frac{\partial f_2}{\partial y} = 0.
\end{align*}
By setting $g_{1,x}(x,y,z_1)=(1+z_1)/2$, $g_{2,x}(x,y,z_1,z_2)=-z_1z_2$, and $g_{1,y}=g_{2,y}=0$, we get that We $f_1,f_2$ is a Pfaffian chain.

We set $g(x,y,z_1,z_2)=2z_2-1$.
By a half-angle trigonometric identity, we have that $f(x,y)=g(x,y,f_1,f_2)$, so $f$ is indeed Pfaffian.
The restriction on $U$ is needed only for $f_1$ to be well-defined in $U$. 
\end{proof}

Claim \ref{cl:cosPfaff} cannot be generalized to $U=\RR^2$.
Indeed, one can create a set of $m$ points and a set of $n$ scalings of the curve $y=\cos x$, where each point is incident to each curve.
Such a configuration has $mn$ incidences, which contradicts Theorem \ref{th:IncPfaffFuncs}.

Pfaffian functions also include logarithms, exponential functions, roots, inverse trigonometric functions, and more. 
Adding, multiplying, and composing Pfaffian functions yields Pfaffian functions. 

For an example that includes integration, consider a Pfaffian function $h(x,y)=y-h_1(x)$ and let $f(x,y) = y-\int_{c}^{x}h_1(t) dt$. 
(Proving this case covers all Liouvillian functions, as defined in the introduction.)
Since $h$ is Pfaffian, there exists a Pfaffian chain $f_1,\ldots, f_r$ such that $h$ is a polynomial in $x,y,f_1,\ldots, f_r$. 
We claim that $f_1,\ldots, f_r,f$ is also a Pfaffian chain.
Indeed, the derivative of $\int_{c}^{x}h(t) dt$ by $x$ is a polynomial in $x,y,f_1,\ldots, f_r$.
The derivative of $\int_{c}^{x}h(t) dt$ by $y$ is zero.
This longer Pfaffian chain implies that $f$ is a Pfaffian function. 

As usual, we require an upper bound on the number of intersection points of two curves defined by Pfaffian functions. 
Khovanski\u{\i} \cite{Khovanskii91} contains a variety of such results.
We chose to rely on Theorem 2 of Section 4.6 of that work, since it does not require introducing additional technical concepts.
The following is a special case of that theorem.

\begin{theorem} \label{th:PfaffFuncConn}
For an open $U \subset \RR^2$, let $f_1,f_2$ be pfaffian functions of the form $U \to \RR$. 
Then the number of connected components of the point set satisfying $f_1 = f_2 = 0$ is at most a finite expression that depends only on the orders and degrees of the Pfaffian functions. 
\end{theorem}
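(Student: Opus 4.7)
The plan is to derive the bound as a special case of Khovanski\u{\i}'s general finiteness theorem for Pfaffian systems, using induction on the order $r$ of a common Pfaffian chain for $f_1$ and $f_2$. I would first reduce to the case where both $f_1$ and $f_2$ are expressed in terms of a single chain $\phi_1, \ldots, \phi_r$ of degree at most $\alpha$; this costs at most a concatenation of the two given chains and can only increase the parameters by a constant factor. Once this is done, the number of connected components of $\{f_1=f_2=0\}$ is bounded above by the number of connected components plus the number of intersection points with a generic auxiliary curve, so it suffices to bound $|\{f_1=f_2=0\}|$ under a suitable genericity assumption.

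The base case $r=0$ is handled by Bezout's theorem: the $f_i$ are polynomials in $\RR[x,y]$ of bounded degree, so if they share no common component they meet in finitely many points, and if they share a component the component count of the shared curve is bounded by Harnack--Milnor in terms of the degree alone. For the inductive step, I would apply a Rolle--Khovanski\u{\i} argument along the smooth part of $C_1=\{f_1=0\}$. Concretely, use the implicit function theorem on $C_1$ to parameterize each smooth arc, and observe that $f_2$ restricted to such an arc is again a Pfaffian function (in one real parameter) of the same chain. Between any two zeros of $f_2|_{C_1}$ there must be a zero of its derivative along $C_1$, which can be expressed as a Pfaffian combination of $x, y, \phi_1,\ldots,\phi_r$ of controlled degree. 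Iterating the Rolle step $r$ times eliminates the chain entries one by one, reducing to a polynomial count handled by the base case.

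The remaining arithmetic is simply tracking how the degree of the auxiliary Pfaffian expression grows with each Rolle step and with each application of the inductive hypothesis; combining the resulting geometric progression yields the desired closed-form bound depending only on $r$ and $\alpha$ (and the degrees of the polynomial representations of $f_1,f_2$). The main obstacle is the transition from bounding intersection points to bounding connected components: in general a component need not contain an isolated point of the zero set, so I would preface the induction by a generic perturbation of one function (say replacing $f_2$ by $f_2 - \delta$ for a regular value $\delta$) to guarantee that every component of $\{f_1 = f_2 = 0\}$ contributes at least one transverse intersection with a nearby smooth curve. The bookkeeping for this perturbation, together with the degree tracking in the Rolle recursion, is exactly what Khovanski\u{\i} carries out in Section 4.6 of his monograph, and my proof would ultimately cite that explicit estimate rather than reproduce it.
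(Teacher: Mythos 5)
The paper does not actually prove this theorem: it is stated explicitly as a special case of Theorem~2 in Section~4.6 of Khovanski\u{\i}'s monograph \cite{Khovanskii91}, and the text even notes that no explicit bound is given because it would require additional technical definitions. So your ``proof,'' which ends by citing exactly that result, is in agreement with the paper's treatment --- the paper cites, you cite.

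The additional sketch you interleave (concatenate the chains, perturb to reduce component counts to transverse intersection counts, then induct via a Rolle--Khovanski\u{\i} step along $\{f_1=0\}$, with $r=0$ as the Milnor--Thom/Bezout base case) is the correct high-level shape of Khovanski\u{\i}'s finiteness machine, and it is useful context that the paper omits. Two cautions if you ever try to flesh it out. First, the claim that ``$f_2$ restricted to an arc of $C_1$ is again Pfaffian in one parameter of the same chain'' is not automatic from the implicit function theorem: the parameterization of the arc need not itself be Pfaffian, and Khovanski\u{\i}'s actual argument avoids parameterizing, instead working with separating solutions of a $1$-form and counting intersections with them. Second, ``iterating the Rolle step $r$ times eliminates the chain entries one by one'' is not quite how the recursion terminates; a single Rolle step does not generally lower the order $r$ of the chain --- it replaces the function being counted by a new Pfaffian function of the \emph{same} chain but controlled degree, and the induction is on a more elaborate complexity measure, with the contribution from noncompact components tracked at each stage. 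Since you plan to cite the explicit estimate from Section~4.6 anyway, these gaps do not affect your conclusion, but they would matter if the sketch were meant to be self-contained.
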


We do not state an explicit upper bound on the number of connected components in Theorem \ref{th:PfaffFuncConn}, since it is rather complicated and requires additional technical definitions. 
This theorem implies the following. 

\begin{corollary} \label{co:PfaffFuncIntersection}
Consider two curves in $\RR^2$ that are defined by Pfaffian functions and do not share an infinite connect component. 
Then the number of intersection points between the two curves is at most a finite expression that depends only on the orders and degrees of the Pfaffian functions.
\end{corollary}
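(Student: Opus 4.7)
The approach is to convert the intersection-count directly into a connected-component count and invoke Theorem~\ref{th:PfaffFuncConn}. Write the two curves as $\gamma_1=\{f_1=0\}$ and $\gamma_2=\{f_2=0\}$, where $f_1,f_2$ are Pfaffian functions defined on open sets $U_1,U_2\subset\RR^2$. On the open set $U=U_1\cap U_2$, both restrictions remain Pfaffian of the same order and degree, and any point in $\gamma_1\cap\gamma_2$ lies in $U$, since a point outside $U_i$ cannot be on $\gamma_i$. Thus $\gamma_1\cap\gamma_2=\{p\in U: f_1(p)=f_2(p)=0\}$, which is precisely the kind of set that Theorem~\ref{th:PfaffFuncConn} controls.

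Applying Theorem~\ref{th:PfaffFuncConn} gives an upper bound $N$ on the number of connected components of $\gamma_1\cap\gamma_2$, depending only on the orders and degrees of $f_1$ and $f_2$. The remaining step is to show that each connected component of $\gamma_1\cap\gamma_2$ is a single point. This is exactly where the hypothesis enters: a connected subset of $\RR^2$ with more than one point is uncountable, and in particular infinite, so such a component would be a shared infinite connected component of $\gamma_1$ and $\gamma_2$, contradicting the assumption. Hence every component is a singleton, and the total number of intersection points is at most $N$.

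The whole argument is essentially a repackaging of Theorem~\ref{th:PfaffFuncConn}; the only non-trivial content beyond that theorem is the elementary topological fact that a finite connected subset of a Hausdorff space is a single point. This is the only ``obstacle,'' and it is purely formal. In short, the hypothesis of having no shared infinite component is precisely what is needed to ensure that each of the finitely many components of the joint zero set collapses to an intersection point.
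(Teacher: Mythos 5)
Your argument is correct and is exactly the (implicit) argument the paper intends: restrict to the common domain $U=U_1\cap U_2$, invoke Theorem~\ref{th:PfaffFuncConn} to bound the number of connected components of the joint zero set, and then use the no-shared-infinite-component hypothesis together with the elementary fact that any connected subset of $\RR^2$ with at least two points is infinite to conclude that every component is a singleton. The paper leaves this as an unstated ``this theorem implies the following,'' so your write-up simply supplies that one-line justification.
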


\section{Incidences with Pfaffian Curves} \label{sec:IncPfaffCurves}

In this section, we derive our incidence bound with Pfaffian curves. 
That is, we prove Theorem \ref{th:IncPfaffCurves}.
Our approach follows the classical \emph{cutting} proof method.
For example, see \cite[Chapter 4]{Matousek12}.

We first recall a weaker incidence bound that does not rely on any geometric properties (for example, see \cite[Lemma 3.4]{Sheffer22}). 

\begin{lemma} \label{le:KST}
Let $\pts$ be a set of $m$ points and let $\curves$ be a set of $n$ curves, both in $\RR^2$. 
If the incidence graph of $\pts\times \lines$ contains no $K_{s,t}$, then
\begin{align*} 
I(\pts, \curves) = &O_{s,t}(mn^{1-1/s} + n), \quad \text{ and } \\[2mm]
I(\pts, \curves) &= O_{s,t}(m^{1-1/t}n + m).
\end{align*}
\end{lemma}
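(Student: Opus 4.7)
The plan is to apply the classical K\H{o}v\'ari--S\'os--Tur\'an double-counting argument directly to the incidence graph, which is bipartite with parts $\pts$ (of size $m$) and $\curves$ (of size $n$), has $I := I(\pts,\curves)$ edges, and by assumption contains no $K_{s,t}$. The lemma makes no use of geometry, so the proof should be purely combinatorial.

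For the first bound $O_{s,t}(mn^{1-1/s}+n)$, I count pairs $(S,\gamma)$ in which $S\subseteq\pts$ has $|S|=s$, $\gamma\in\curves$, and every point of $S$ lies on $\gamma$. The no-$K_{s,t}$ hypothesis says that any fixed $s$-subset of $\pts$ is simultaneously incident to at most $t-1$ curves, so the number of such pairs is at most $(t-1)\binom{m}{s}$. On the other hand, if $d_\gamma$ denotes the number of points on $\gamma$, then $\gamma$ contributes $\binom{d_\gamma}{s}$ pairs, giving
\[ \sum_{\gamma\in\curves}\binom{d_\gamma}{s} \;\le\; (t-1)\binom{m}{s}. \]
Since $\sum_\gamma d_\gamma = I$, applying Jensen's inequality to the convex function $x\mapsto\binom{x}{s}$ yields $n\binom{I/n}{s} \le (t-1)\binom{m}{s}$. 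Solving this for $I$ gives $I = O_{s,t}(mn^{1-1/s})$ in the regime $I \ge sn$; in the complementary regime $I < sn$ the bound $I = O_s(n)$ is trivial. Combining the two cases yields the stated estimate.

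The second bound follows by the symmetric count, now on pairs $(p,T)$ with $p\in\pts$ and $T\subseteq\curves$, $|T|=t$, such that $p$ lies on every curve of $T$. The no-$K_{s,t}$ hypothesis now limits each $t$-subset of curves to at most $s-1$ common incident points, so the total number of such pairs is at most $(s-1)\binom{n}{t}$, and the identical Jensen argument with the roles of $(m,n)$ and $(s,t)$ interchanged produces $I = O_{s,t}(m^{1-1/t}n+m)$.

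This is the textbook K\H{o}v\'ari--S\'os--Tur\'an theorem transplanted to a bipartite incidence graph, so no geometric input is needed and there is essentially no obstacle. The only mildly technical point is the case split between the low- and high-incidence regimes, which is what produces the additive $+n$ (respectively $+m$) term in the stated bounds.
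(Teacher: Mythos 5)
Your proof is correct; it is the standard K\H{o}v\'ari--S\'os--Tur\'an double-counting argument, and the case split between $I\ge sn$ and $I<sn$ (and its symmetric counterpart) is handled appropriately. The paper does not supply its own proof of this lemma --- it cites it as Lemma 3.4 of \cite{Sheffer22} and remarks that the bound uses no geometric input --- so the argument you give is precisely the one the cited reference (and any standard treatment of KST) provides.
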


For example, for any three points $p,q,r \in \RR^2$, at most one circle is incident to all three points.
Thus, when studying circles the incidence graph contains no $K_{3,2}$, so Lemma \ref{le:KST} leads to $I(\pts,\curves)=O(mn^{2/3}+n)$ and $I(\pts,\curves)=O(m^{1/2}n+m)$.

Lemma \ref{le:KST} gives a weaker bound since it does not rely on any geometric properties. 
It is based only on not having $K_{s,t}$ in the incidence graph.
For that reason, this lemma is not limited to algebraic curves.
The exact same proof also holds for Pfaffian curves (and for any other definition of curves).

Let $\curves$ be a set of Pfaffian curves in $\RR^2$.
We define a \emph{pf-cell} to be a region in $\RR^2$ that is bounded by at most two segments of curves from $\curves$ and at most two vertical line segments.
A pf-cell may also be unbounded. 
For examples of pf-cells, see Figure \ref{fi:pfCells}(a--c).
A \emph{cutting} of $\curves$ is a partition of $\RR^2$ into pf-cells.
Every point of $\RR^2$ is either in the interior of a pf-cell or on the boundary of at least two pf-cells. 
For example, see Figure \ref{fi:pfCells}(d).

 \begin{figure}[ht]
    \centering
    \begin{subfigure}[b]{0.48\textwidth}
    \centering
        \includegraphics[width=0.97\textwidth]{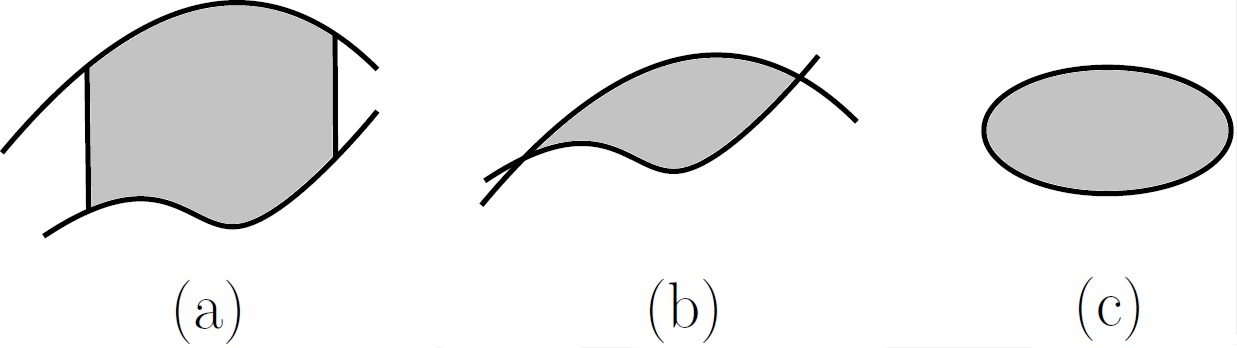}
    \end{subfigure}
    \hspace{1cm}
    \begin{subfigure}[b]{0.28\textwidth}
        \centering
        \includegraphics[width=\textwidth]{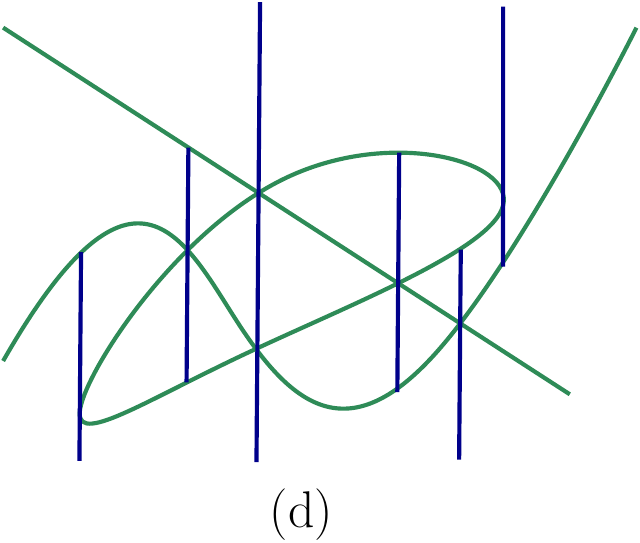}
    \end{subfigure}
    \caption{(a) A pf-cell whose boundary consists of four segments. (b) A pf-cell whose boundary consists of two segments. (c) A pf-cell whose boundary consists of one segment. (d) A cutting of the three green curves.}
    \label{fi:pfCells}
\end{figure}

\begin{lemma} \label{le:cutting}
Let $\curves$ be a set of $n$ Pfaffian curves of degree at most $k$, none of which is a segment of a vertical line. 
For each $1 \le r <n$, there exists a cutting with $O_k(r^2 \log^2 n)$ pf-cells, such that the interior of each cell is intersected by at most $n/r$ curves of $\curves$. 
\end{lemma}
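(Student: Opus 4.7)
The plan is to use the classical random-sampling ($\eps$-net) construction of a cutting, adapted to Pfaffian curves. I would let $R\subseteq \curves$ be a random sample of size $s=\Theta_k(r\log n)$, build the vertical decomposition of $R$, and show that with positive probability this decomposition is a cutting with the desired parameters.

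The vertical decomposition of $R$ is constructed as follows. For each $\gamma\in R$, mark its $O_k(1)$ vertical-tangent points (Lemma~\ref{le:VertTan}) together with its $O_k(1)$ intersection points with every other curve of $R$ (Theorem~\ref{th:BezPfaff}); these breakpoints split each $\gamma$ into $x$-monotone Pfaffian arcs. From every breakpoint I extend a vertical segment upward and downward, stopping the first time it meets another curve of $R$ (or continuing to $\pm\infty$). The resulting planar subdivision has $O_k(s^2)$ vertices, edges, and faces, and every face is bounded above and below by at most one $x$-monotone arc of $R$ and on the left and right by at most one vertical segment, so every face is a pf-cell. This yields a subdivision into $O_k(s^2)=O_k(r^2\log^2 n)$ pf-cells.

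It remains to show that, for an appropriate constant hidden in $s$, with positive probability every pf-cell of the decomposition is crossed by at most $n/r$ curves of $\curves$. I would apply the $\eps$-net theorem to the range space whose ground set is $\curves$ and whose ranges are, for every potential pf-cell $\Delta$, the set of curves of $\curves$ whose interior crosses $\Delta$. Each pf-cell is determined by a bounded number of curves (the two bounding Pfaffian arcs and the two curves whose features supply the left and right vertical walls), and Theorem~\ref{th:BezPfaff} bounds the number of sign patterns that the ``lies above / lies below / crosses'' primitives can realise on any finite collection of Pfaffian curves. Therefore the VC dimension of this range space is bounded in terms of $k$ alone. The Haussler--Welzl theorem then guarantees that, for a constant $c_k$, a random sample of size $c_k\, r\log r$ is a $(1/r)$-net with positive probability. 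For such an $R$, any pf-cell $\Delta$ crossed by more than $n/r$ curves of $\curves$ would contain at least one of those curves in its interior; but no curve of $R$ can pass through the interior of a cell of the decomposition of $R$, a contradiction. Absorbing $\log r$ into $\log n$ gives the claimed cell count.

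I expect the main obstacle to be verifying the VC-dimension bound on the range space of pf-cells. I would handle it by expressing the event ``the curve $\delta\in\curves$ crosses the pf-cell determined by a fixed bounded tuple $(\gamma_1,\dots,\gamma_j)\subseteq R$'' as a boolean combination of a bounded number of primitives of the form ``$\delta$ and $\gamma_i$ intersect in at most $c$ points inside a specified vertical strip'', each of which has finite VC dimension thanks to Theorem~\ref{th:BezPfaff}. Once this is in place, the remainder of the argument is routine arrangement bookkeeping together with standard $\eps$-net machinery, and the vertical-line assumption in the hypothesis is precisely what is needed so that Lemma~\ref{le:VertTan} can be applied to every $\gamma\in R$.
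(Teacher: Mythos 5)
Your construction of the vertical decomposition is essentially identical to the paper's: random sample $R$ of size $s=\Theta_k(r\log n)$, vertical rays shot up and down from intersection points and vertical-tangent points of curves of $R$, with Theorem~\ref{th:BezPfaff} and Lemma~\ref{le:VertTan} giving $O_k(s^2)$ pf-cells. The divergence is in the probabilistic step. The paper argues directly by a union bound: each pf-cell is determined by at most four curves of $\curves$, so there are fewer than $2n^4$ candidate cells; for any fixed candidate crossed by more than $n/r$ curves, the probability that the sample avoids all of them is at most $(1-1/r)^s < e^{-s/r}$; choosing $s=5r\log n$ makes the union bound go to zero, with no appeal to $\eps$-net theory at all. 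You instead invoke Haussler--Welzl, which forces you to establish that the range space of ``curves crossing a pf-cell'' has VC dimension bounded in terms of $k$ alone. That step is genuinely nontrivial for Pfaffian (as opposed to algebraic or semi-algebraic) curves --- one needs a primal or dual shatter-function bound, typically derived from the $O_k(1)$ pairwise intersection bound (Theorem~\ref{th:BezPfaff}) together with an arrangement-complexity argument --- and you correctly flag it as the main obstacle. The payoff of your route is a slightly sharper sample size ($r\log r$ rather than $r\log n$), but this is absorbed by the stated $O_k(r^2\log^2 n)$ anyway. In short, both arguments are valid and reach the same bound; the paper's union-bound approach is preferable here precisely because it bypasses the VC-dimension verification, which is the one step of your plan you have not actually carried out. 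If you do fill in that shatter-function estimate, your proof stands on its own as an alternative.
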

\begin{proof} 
For $s\in (0,1)$ that is set below, we make $s$ random curve choices from $\curves$.
Each random choice is made uniformly, and the same curve may be chosen more than once. 
Let $S$ be the set of at most $s$ chosen curves.
A curve that is chosen more than once appears only once in $S$.

For any intersection point $p$ of two curves of $S$, we shoot vertical rays up and down from $p$. 
These rays end once they hit a curve of $S$.
If a ray never hits a curve of $S$, it continues indefinitely. 
We also shoot rays up and down from every point where the tangent of a curve of $S$ is vertical. 
For example, see Figure \ref{fi:pfCells}(d).

Let $R$ be the set of all vertical rays that were shot as described above.
By Theorem \ref{th:BezPfaff}, every two curves of $\curves$ have $O(k^2)$ intersection points.
By Lemma \ref{le:VertTan}, every curve of $S$ has a vertical tangent in $O(k^2)$ points.
Thus, we get that the number of rays in $R$ is $O_k(s^2)$.
It is not difficult to check that $S\cup R$ is a cutting.

We split every curve of $S$ into arcs at every intersection point with another curve of $S$ and at every endpoint of a ray of $R$. 
By Theorem \ref{th:BezPfaff}, every curve of $\curves$ has $O_k(s)$ intersection points, leading to a total of $O_k(s^2)$.
By Lemma \ref{le:VertTan}, there are $O_k(s)$ endpoints of rays.
That is, there are $O_k(s^2)$ arcs.
Since the boundary of each pf-cell includes 1--4 arcs and each arc is on the boundary of at most two cells, there are $O_k(s^2)$ pf-cells.

\parag{A probabilistic argument.}
To complete the proof, we now show that, with positive probability, every pf-cell is intersected by at most $n/r$ curves of $\curves$. 
This implies that there exists a choice of $S$ that satisfies this property, as required. 

We recall that the boundary of a pf-cell consists of at most two segments of curves from $\curves$, and at most two vertical rays. 
Each vertical ray originates from at most one other curve of $\curves$, so a pf-cell is defined by at most four curves. See Figure \ref{fi:FourSegments}.

\begin{figure}[ht]
\centerline{\includegraphics[width=0.23\textwidth]{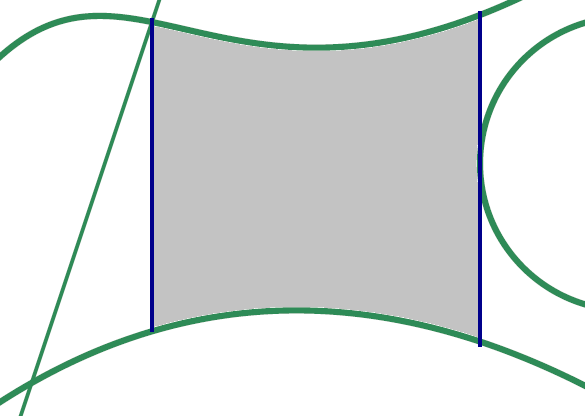}}
\caption{A pf-cell is defined by at most four curves.}
\label{fi:FourSegments}
\end{figure}

We say that a pf-cell is \emph{bad} if it is intersected by more than $n/r$ curves $\curves$. 
For a specific bad pf-cell $\tau$ to exist, the 1--4 curves of $\curves$ that define $\tau$ need to be chosen to $S$ and all curves that intersect $\tau$ need not to be chosen to $S$. 
When choosing a single curve from $\curves$, the probability of not choosing $n/r$ specific curves is $(1-1/r)$.
When performing $s$ choices, the probability of never choosing $n/r$ specific curves is $(1-1/r)^s$.
Recalling the inequality $1+x\le e^x$, we conclude that the probability of a specific bad pf-cell to exist is smaller than $e^{-s/r}$.

Since each bad cell is defined by at most four curves of $\curves$, there are fewer than $2n^4$ potential bad pf-cells.
By the union bound principle, the chance of at least one of those bad cells existing is smaller than $2n^4 e^{-s/r}$.
Setting $s=5r\log n$, we obtain that the expected number of bad cells is smaller than
\[ 2n^4 e^{-5\log n} = 2n^4 \cdot n^{-5} = 2/n. \]
As $n$ grows, this expectation becomes arbitrarily small.
Thus, with positive probability, there are no bad cells. 

To complete the proof, we recall that there are $O_k(s^2)$ pf-cells.
With $s=5r\log n$, we obtain $O(r^2\log^2 n)$ pf-cells.
\end{proof}

We are now ready to prove Theorem \ref{th:IncPfaffCurves}.
We first recall the statement of this result.
\vspace{2mm}

\noindent {\bf Theorem \ref{th:IncPfaffCurves}.}
\emph{Let $\pts$ be a set of $m$ points and let $\curves$ be a set of $n$ Pfaffian curves of degree at most $k$, both in $\RR^2$.
If the incidence graph of $\pts$ and $\curves$ contains no copy of $K_{s,t}$ then}
\[ I(\pts,\curves) = O_{k,s,t}\left(m^{\frac{s}{2s-1}}n^{\frac{2s-2}{2s-1}}\log^{\frac{2s-2}{2s-1}}n + n \log^2 n + m
 \right). \] 
\begin{proof}
We rotate $\RR^2$ so that no element of $\curves$ is a vertical lines. 
By Lemma \ref{le:PcurveProps}(a), the curves of $\curves$ remain Pfaffian after a rotation.
Similarly, a rotation does not affect $I(\pts,\curves)$ or the incidence graph.
By Lemma \ref{le:KST} and the assumption on the incidence graph, for every $\pts'\subset \pts$ and $\curves'\subset \curves$, we have that
\begin{equation} \label{eq:KSTapplication}
I(\pts',\curves') = O_{k,s,t}(|\pts'|\cdot |\curves'|^{1-1/s} + |\curves'|).
\end{equation}

By Lemma \ref{le:cutting}, for every $1 \le r <n$ there exists a cutting with $O(r^2\log^2 n)$ pf-cells, such that the interior of each pf-cell is intersected by at most $n/r$ curves of $\curves$.
We consider such a cutting with a value of $r$ that is set below. 
Let the number of pf-cells be $c=O(r^2\log^2 n)$ and denote the cells as $C_1,\ldots,C_c$.
Let $\pts_i$ be the set of points from $\pts$ in the interior of $C_i$.
Let $\curves_i$ be the set of curves from $\curves$ that intersect the interior of $C_i$.
Let $\curves_0\subset \curves$ be the set of curves that form the cutting and let $\pts_0$ be the set of points from $\pts$ that are on the curves of $\curves_0$ and the vertical rays.
That is, the curves of $\curves_0$ do not intersect the interior of any cell and the points of $\pts_0$ are not in the interior of any cell. 

\parag{Three types of incidences.}
To obtain our upper bound on $I(\pts,\curves)$, we first split this quantity into three disjoint parts:
\[ I\left(\pts,\curves \right) = \sum_{i = 1}^c I\left(\pts_i,\curves_i \right) + I\left(\pts_0,\curves\setminus\curves_0 \right) + I\left(\pts_0,\curves_0 \right). \]

\begin{figure}[ht]
\centerline{\includegraphics[width=0.23\textwidth]{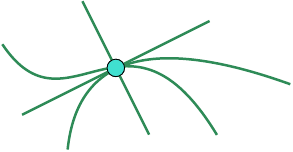}}
\caption{A point that is incident to four curves is a corner of at least eight pf-cells.}
\label{fi:Corner}
\end{figure}

We first consider $I(\pts_0,\curves_0)$.
We define a \emph{corner} of a pf-cell as a point where two of the boundary segments of that pf-cell meet.
The number of corners in a pf-cell is equivalent to the number of boundary segments (a pf-cell with one boundary segment has zero corners. See Figure \ref{fi:pfCells}(c)). 
By definition, each pf-cell has at most four corners.
If a point of $\pts_0$ is on $j$ curves of $\curves_0$, then it is a corner of at least $2j$ cells. 
For example, see Figure \ref{fi:Corner}.
Since there are $O(r^2\log^2 n)$ cells, such corners contribute $O(r^2\log^2 n)$ incidences to $I(\pts_0,\curves_0)$.
A point of $\pts_0$ that is not a corner is incident to exactly one curve of $\curves_0$. 
We conclude that
\begin{equation} \label{eq:P0C0}
I(\pts_0,\curves_0) = O(r^2\log^2 n + |\pts_0|).
\end{equation}

Next, we consider $I(\pts_0,\curves\setminus \curves_0)$.
Every point that participates in such an incidence is an intersection of a curve from $\curves\setminus \curves_0$ with a curve of and a boundary segment. 
By definition, there are $O(r^2\log^2 n)$ boundary segments and each is intersected by at most $n/r$ curves from $\curves\setminus \curves_0$.
We conclude that 
\begin{equation} \label{eq:P0C}
I(\pts_0,\curves\setminus\curves_0) = O(nr\log^2 n).
\end{equation}

It remains to derive an upper bound for $\sum_{i = 1}^c I\left(\pts_i,\curves_i \right)$.
By definition, $|\curves_i|\le n/r$ for every $i$.
Applying \eqref{eq:KSTapplication} separately in each cell leads to
\begin{align} 
\sum_{i = 1}^c I\left(\pts_i,\curves_i \right) = \sum_{i = 1}^c O_{k,s,t}(|\pts_i|\cdot |\curves_i|^{1-1/s} + |\curves_i|) &= O_{k,s,t}\left(\sum_{i = 1}^c |\pts_i|\cdot (n/r)^{1-1/s} + \sum_{i = 1}^c\frac{n}{r}\right) \nonumber \\[2mm]
&\hspace{7mm} =O_{k,s,t}\left(\frac{mn^{1-1/s}}{r^{1-1/s}} + nr\log^2n\right). \label{eq:PC} 
\end{align}

\parag{Completing the analysis.}
Combining \eqref{eq:P0C0}, \eqref{eq:P0C}, and \eqref{eq:PC} implies that
\[ I(\pts,\curves) = O_{k,s,t}\left(\frac{mn^{1-1/s}}{r^{1-1/s}} + nr\log^2n\right). \]

One term in this bound decreases with $r$ and the other increases with $r$.
Thus, to optimize the bound, we choose the value of $r$ that makes both terms equal.
This is the case when
\begin{equation} \label{eq:rVal} 
r = \frac{m^{s/(2s-1)}}{{n^{1/(2s-1)}\log^{2s/(2s-1)}}n}. 
\end{equation}
Plugging the above value of $r$ in the incidence bound leads to
\[ I(\pts,\curves) = O_{k,s,t}\left(n\cdot \frac{m^{s/(2s-1)}}{{n^{1/(2s-1)}\log^{2s/(2s-1)}}n}\cdot \log^2n\right)= O_{k,s,t}\left(m^{\frac{s}{2s-1}}n^{\frac{2s-2}{2s-1}}\log^{\frac{2s-2}{2s-1}}n\right). \]

Our choice of $r$ may violate the restriction $1\le r <n$. 
Consider the case where $r<1$. 
In this case, \eqref{eq:rVal} implies that $m<n^{1/s}\log^2 n$. 
Combining this with \eqref{eq:KSTapplication} leads to $I(\pts,\curves)= O_{k,s,t}(n\log^2 n)$.

It remains to consider the case where $r\ge n$. 
In this case, \eqref{eq:rVal} implies that $n<\sqrt{m}$.
By Theorem \ref{th:BezPfaff}, the incidence graph does not contain a $K_{6k^2+k+2,2}$.
The second part of Lemma \ref{le:KST} implies that
\[ I(\pts,\curves) = O_k(n\sqrt{m}+m) = O_k(m). \]
\end{proof}

\section{Incidences with Curves Defined by Pfaffian Functions} \label{sec:IncPfaffFunc}

In this section, we study incidences with curves defined by Pfaffian functions. 
That is, we prove Theorem \ref{th:IncPfaffFuncs}.
This requires the following incidence result from \cite{FPSSZ17}.

\begin{theorem} \label{th:HyperplanesInc} 
Let $\pts$ be a set of $m$ points and $H$ be a set of $n$ hyperplanes, both in $\RR^d$. 
For every $\eps>0$, if the incidence graph of $\pts$ and $H$ contains no $K_{s,t}$ then
\[ I(\pts,H) = O_{s,t,\eps}(m^{\frac{sd-s}{sd-1}+\eps}n^{\frac{sd-d}{sd-1}}+m+n). \]
\end{theorem}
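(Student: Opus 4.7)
The plan is a polynomial partitioning argument, performed by double induction on the ambient dimension $d$ and on the number of points $m$. The base case $d=1$ is trivial since ``hyperplanes'' in $\RR^1$ are single points, so $I(\pts,H)\le \min(m,n)\cdot(s+t)$ and fits inside $O(m+n)$. The base case $m$ small (say $m\le s$) is also trivial because every point contributes at most $n$ incidences, which is dominated by $m+n$.

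For the inductive step, fix a parameter $D$ to be chosen later, and apply the Guth--Katz polynomial partitioning theorem to obtain a polynomial $P\in\RR[x_1,\dots,x_d]$ of degree at most $D$ whose zero set $Z(P)$ partitions $\RR^d\setminus Z(P)$ into $O(D^d)$ open cells, each containing $O(m/D^d)$ points of $\pts$. Write $\pts_0=\pts\cap Z(P)$, $\pts_1=\pts\setminus\pts_0$, and split $H=H_0\cup H_1$ where $H_0$ are the hyperplanes fully contained in $Z(P)$. Then
\[ I(\pts,H)=I(\pts_1,H_1)+I(\pts_1,H_0)+I(\pts_0,H). \]
For the cell term $I(\pts_1,H_1)$, a hyperplane not contained in $Z(P)$ meets $Z(P)$ in a $(d{-}2)$-dimensional variety of degree $\le D$, and by a standard Bézout/cell-crossing count enters $O(D^{d-1})$ cells. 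Writing $m_\sigma,n_\sigma$ for points and hyperplanes in cell $\sigma$, Lemma \ref{le:KST} applied cell-by-cell gives $I(\pts\cap\sigma,H_\sigma)=O_{s,t}(m_\sigma n_\sigma^{1-1/s}+n_\sigma)$, and summing via $\sum m_\sigma\le m$, $\sum n_\sigma=O(nD^{d-1})$, together with Hölder's inequality, yields an expression in $m,n,D$ that will be balanced against the surface term.

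For the surface terms, note that $|H_0|=O(D)$ (an irreducible component of $Z(P)$ of degree $\ge 2$ contains no hyperplane, and the linear factors of $P$ are at most $D$ in number), so $I(\pts_1,H_0)$ is bounded trivially. The delicate piece is $I(\pts_0,H)$: points lying on the partitioning variety and arbitrary hyperplanes. Here the induction on dimension enters. Decompose $Z(P)$ into its irreducible components; on each component $W$, one of two things happens for each hyperplane $h$: either $h\supseteq W$ (handled by a trivial count on the $O(D)$ components, absorbed into the $m+n$ term) or $h\cap W$ is a proper subvariety of $W$. In the latter case, choose a generic projection from $W$ to $\RR^{d-1}$ which is finite-to-one on $\pts_0\cap W$ and sends the relevant hyperplane sections to hyperplanes in $\RR^{d-1}$; the $K_{s,t}$-free property is preserved up to constants. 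Applying the inductive hypothesis in dimension $d-1$ gives an incidence bound for $\pts_0$ versus $H$ with the exponents for dimension $d-1$, which, after accounting for the $O(D)$ components, yields an acceptable term.

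Finally, set $D$ to balance the cell contribution against the $\pts_0$ contribution; the natural choice is $D\approx m^{s/(sd-1)}/n^{(s-1)/(sd-1)}$ (up to the regime checks $1\le D$ and $D\le m^{1/d}$, with the other regimes handled by Lemma \ref{le:KST} directly), which produces the target exponents $m^{\frac{sd-s}{sd-1}}n^{\frac{sd-d}{sd-1}}$. The main obstacle is the recursive surface term $I(\pts_0,H)$: a single hyperplane can meet $Z(P)$ in arbitrarily many of the points of $\pts_0$, so Lemma \ref{le:KST} alone is too weak, and one must actually recurse on dimension while controlling how the $K_{s,t}$-free property behaves under projection. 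Each level of the induction on $d$ loses a small polynomial factor in $m$; choosing the recursion depth to depend on $\eps$ is exactly what accounts for the $m^{\eps}$ slack in the exponent, while the terms $m+n$ absorb the contributions from the degenerate regimes and the hyperplanes contained in $Z(P)$.
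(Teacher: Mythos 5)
The paper does not prove this theorem; it cites it as a known result from Fox, Pach, Sheffer, Suk, and Zahl~\cite{FPSSZ17}, so there is no in-paper argument to compare against.

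As for your sketch: the overall skeleton --- polynomial partitioning, a cell-by-cell application of Lemma~\ref{le:KST} combined with H\"{o}lder (a hyperplane meets $O(D^{d-1})$ cells), a trivial bound on the at most $D$ hyperplanes contained in $Z(P)$, and an induction on the ambient dimension for the points that land on $Z(P)$ --- is broadly the right idea and is in the spirit of the FPSSZ approach. However, there is a genuine gap in the recursion on the surface term. You assert that a generic projection from an irreducible component $W$ of $Z(P)$ to $\RR^{d-1}$ ``sends the relevant hyperplane sections to hyperplanes in $\RR^{d-1}$.'' This is false unless $W$ is itself a hyperplane. If $W$ is an irreducible hypersurface of degree $e\ge 2$, the section $h\cap W$ is a $(d-2)$-dimensional algebraic set of degree up to $e$, and its image under a generic projection of $W$ to $\RR^{d-1}$ is a hypersurface of degree roughly $e$, not a hyperplane. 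So the inductive hypothesis phrased for hyperplanes simply does not apply to the recursed instance, and the induction does not close. Fixing this requires widening the class of objects in the inductive statement to bounded-degree semi-algebraic sets (which is exactly what \cite{FPSSZ17} do), tracking how the degree grows with the partitioning parameter $D$, how cells are counted at the second level inside $W$, and verifying that the $K_{s,t}$-free condition survives intersection with $W$ and generic projection; this is where the real work of the proof lies, and it is not merely a routine remark to be filled in. A secondary point worth flagging: the claim that the $K_{s,t}$-free property ``is preserved up to constants'' under a finite-to-one projection also deserves an argument, since the projected sections are positive-dimensional and can collide in ways that create larger complete bipartite subgraphs.
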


We are now ready to prove Theorem \ref{th:IncPfaffFuncs}.
We first recall the statement of this result. 
\vspace{2mm}

\noindent {\bf Theorem \ref{th:IncPfaffFuncs}.}
\emph{Let $F$ be a Pfaffian family of dimension $d$.
Let $\pts$ be a set of $m$ points in $\RR^2$.
Let $\curves$ be a set of $n$ curves from $F$, such that no two share a common component. 
Then for any $\eps>0$, we have that}
\[ I(\pts,\curves) = O_{d,\eps}\left(n^{\frac{2d-4}{2d-3}+\eps}m^{\frac{d-1}{2d-3}}+m+n\right). \]
\begin{proof}
By Theorem \ref{th:PfaffFuncConn}, there exists $t$ such that the incidence graph of $\pts$ and $\curves$ contains no $K_{t,2}$.
The value of $t$ is upper bounded by an expression that includes the order and degree of $F$, but does not depend on $m$ and $n$. 
We move to a dual space $\RR^d$, as follows.

\parag{A dual space.} 
Let $f(x,y) = a_1 \cdot m_1(x,y) + \cdots + a_d \cdot m_d(x,y)$ be the function that defines the Pfaffian family $F$. 
Each curve $\gamma \in \curves$ corresponds to a specific choice of values for $a_1,\ldots,a_d\in \RR$.
The dual of $\gamma$ is the point $\gamma^* = (a_1,\ldots,a_d)\in \RR^d$. 
We set $\curves^* = \{\gamma^*\ :\ \gamma\in \curves\}$.
That is, $\curves^*$ is a set of $n$ points in $\RR^d$.

Let the coordinates of $\RR^d$ be $z_1,\ldots,z_d$. 
We define the dual of a point $p=(p_x,p_y)\in \RR^2$ to be the hyperplane $p^*\subset \RR^d$ that is defined by $z_1 \cdot m_1(p_x,p_y) + \cdots + z_d \cdot m_d(p_x,p_y) = 0$.
We set $\pts^* = \{p^*\ :\ p\in \pts\}$.
That is, $\pts^*$ is a set of $m$ hyperplanes in $\RR^d$.

A point $p\in \pts$ is incident to a curve $\gamma\in\curves$ if and only if the point $\gamma^*$ is incident to the hyperplane $p^*$. 
Indeed, both are equivalent to the equation $a_1 \cdot m_1(p_x,p_y) + \cdots + a_d \cdot m_d(p_x,p_y) = 0$.
This implies that $I(\pts,\curves) = I(\curves^*,\pts^*)$ and that the incidence graphs are identical except that the two sides switched.
In particular, the incidence graph of $\curves^*$ and $\pts^*$ contains no $K_{2,t}$.

We note that the origin is not in $\curves^*$.
Indeed, the origin of $\RR^d$ corresponds to $a_1=\cdots = a_d =0$, which does not define a curve in $\RR^2$. 
Assume for contradiction that there exists a line in $\RR^d$ that contains the origin and two points from $\curves^*$. 
Then there exists a nonzero $c\in \RR$ such that the points can be written as $(a_1,\ldots,a_d)$ and $(c\cdot a_1,\ldots,c\cdot a_d)$. 
This is impossible, since both points correspond to the same curve in $F$ and curves of $C$ do not share common components. 

\parag{Moving one dimension lower.}
By definition, every hyperplane of $\pts^*$ is incident to the origin of $\RR^d$.
We rotate $\RR^d$ around the origin so that no hyperplane of $\pts^*$ is defined by $z_1=0$ and no point of $\curves^*$ has a zero $z_1$-coordinate.
We note that after such a rotation the origin is still not in $\curves^*$ and no line contains the origin and two points from $\curves^*$.

Let $\Pi$ be the hyperplane in $\RR^d$ that is defined by $z_1=1$.
For a point $\gamma^*\in \curves$, let $\ell_{\gamma^*}$ be the line incident to $\gamma^*$ and the origin, and let $\gamma' = \ell_{\gamma^*}\cap \Pi$.
Intuitively, $\gamma'$ is the projection of $\gamma^*$ on $\Pi$, from the origin.
We set $\curves' = \{\gamma'\ :\ \gamma^*\in \curves^*\}$.
By the above, $\curves'$ is a set of $n$ distinct points in $\Pi$. 

For a hyperplane $p^*\in \pts^*$, we define $p' = p^* \cap \Pi$. 
This can be seen as the projection of $p^*$ on $\Pi$ from the origin.
By the above, when thinking of $\Pi$ as $\RR^{d-1}$, we get that $p'$ is a hyperplane. 
We set $\pts' = \{p'\ :\ p^*\in\pts^*\}$.
We claim that $\pts'$ is a set of $m$ distinct hyperplanes in $\RR^d$.
Indeed, given a hyperplane $p'$, we can uniquely reconstruct $p^*$ by taking the union of all lines passing through one point of $p'$ and the origin of $\RR^d$. 

It is not difficult to verify that a point $\gamma^*$ is incident to the hyperplane $p^*$ if and only if $\gamma'$ is incident to $p'$.
This implies that $I(\curves',\pts') = I(\curves^*,\pts^*)= I(\pts,\curves)$.
For the same reason, the incidence graph of $\curves'$ and $\pts'$ does not contain a $K_{2,t}$.
By thinking of $\Pi$ as $\RR^{d-1}$ and apply Theorem \ref{th:HyperplanesInc} in it, we obtain that
\begin{align*} 
I(\pts,\curves) = I(\curves',\pts') &= O_{d,\eps}\left(n^{\frac{2(d-1)-2}{2(d-1)-1}+\eps}m^{\frac{2(d-1)-(d-1)}{2(d-1)-1}}+m+n\right) \\[2mm]
&= O_{d,\eps}\left(n^{\frac{2d-4}{2d-3}+\eps}m^{\frac{d-1}{2d-3}}+m+n\right). 
\end{align*}
\end{proof}

\end{document}